\documentclass[12pt]{amsart} 
\usepackage[fontsize=12pt]{scrextend}
\usepackage[utf8]{inputenc}
\usepackage{amsmath}
\usepackage{amsfonts}
\usepackage{amssymb}
\usepackage{tikz-cd}
\usepackage{hyperref}
\usepackage[english]{babel}
\usepackage{amsthm}
\usepackage{mathtools}
\usepackage{hyperref}
\usepackage{enumitem}
\usepackage[letterpaper, left=3cm, right=3cm, top = 2cm, bottom = 2cm ]{geometry}
\theoremstyle{definition} 
\newtheorem*{remark}{Remark}
\newtheorem{lem}{Lemma}[section]
\newtheorem{prop}[lem]{Proposition}
\newtheorem{coro}[lem]{Corollary}
\newtheorem{theorem}[lem]{Theorem}
\newtheorem{defn}[lem]{Definition}
\newtheorem{de}{Definition}[section]
\newtheorem{lemma}[de]{Lemma}
\newtheorem{eg}[de]{Example}

\makeatletter

\newcommand{\Rmnum}[1]{\expandafter\@slowromancap\romannumeral #1@}
\makeatother

\begin{document}

\title{Character on a homogeneous space }

\author{A.J. Parameswaran, Amith Shastri K.}
\address{School of Mathematics, Tata Institute of Fundamental Research,  
1 Homi Bhabha Road, Colaba, Mumbai, India}
\email{param@math.tifr.res.in}
\email{shastriq@math.tifr.res.in}

\begin{abstract}
In this paper we look at the notion of cohomological triviality of fibrations of homogeneous spaces of affine algebraic groups defined over $\mathbb{C}$ and use topological methods, primarily the theory of covering spaces. This is made possible because of the structure theory of affine algebraic groups. Further, we generalize our results for arbitrary connected algebraic groups and their homogeneous spaces. As an application of our methods, we give a structure result for quasi- reductive algebraic groups(i.e groups whose unipotent radical is trivial), upto isogeny.
\end{abstract}
\maketitle

\section{Introduction}
We will be working over the field of complex numbers throughout this paper. A linear algebraic group $G$ acting linearly on a vector space $V$, so that there is an open dense orbit $U$, is known as a prehomogeneous representation  and has been extensively studied by Sato and Kimura in \cite{Sato}. Further, $U\approx \frac{G}{H}$ is a homogeneous space, where $H$ is a closed subgroup of $G$. In \cite{Damon}, Damon considers a special kind of prehomogeneous representations, in which the complement of $U$ is actually a divisor, the defining polynomial $f$ of the divisor is a relative invariant and hence is homogeneous ( c.f. corollary 2.7 in \cite{Kimura}) . We note that, when $G$ is reductive the following are equivalent 
\begin{enumerate}
\item $U^{c}$ is a hypersurface.
\item $H$ is reductive. 
\item $\frac{G}{H}$ is affine.
\end{enumerate} 
 The equivalence of  2 and 3 is known as Matsushima's theorem see \cite{Rich}. For the proof of the above equivalence see \cite{Kimura} Theorem 2.28 page 43. This function $f$ is defined on all of $V$, and in particular restricting to $U$ we have $f:U\to \mathbb{C}^*$. From the point of view of singularity theory, which Damon is concerned with, the complement of $U$ defines a non- isolated singularity and $f:\frac{G}{H}\to\mathbb{C}^*$ is a global Milnor fibration, with fibres also being homogeneous spaces.
 For the study of the Milnor fibration Damon defines (rational) cohomological triviality of fibrations and obtains a very simple numerical criterion for fibrations over the base $S^1$  to satisfy cohomological triviality.\par
In this paper, however, we are not concerned with the topology of the non- isolated singularities. We are interested in the characterization of cohomologoical triviality of  fibrations over $\mathbb{C}^*$ considering the general setting of homogeneous spaces of algebraic groups, instead of the set up of prehomogeneous spaces with the complement a divisor as in  \cite{Damon}. \par

In section \ref{Prelims} we define the equivalent notion of cohomological triviality of fibrations as in \cite{Damon} (the equivalence of both is also proved in the same article). We will be working with singular cohomology with rational coefficients throughout and will omit the coefficients. We then prove some preliminary topological results and look at examples of fibrations which are cohomologically trivial and also examples of fibrations which are not cohomologically trivial.

In section \ref{Ctr} we come to the main results of the paper, however instead of considering prehomogeneous representations of an affine algebraic group $G$ we consider homogeneous spaces of connected affine algebraic groups with a nowhere vanishing non- constant  function to $\mathbb{C}$. This function in turn gives rise to a character, $\mathcal{X}$, of $G$. This character in turn gives rise to an auxiliary fibration which we first analyse and we show that the fibration, which is also an exact sequence of algebraic groups,
\begin{center}
\begin{tikzcd}
S^0\arrow{r} & G\arrow{r}{\mathcal{X}_0} & \mathbb{C}^*
\end{tikzcd}
\end{center}
is cohomologically trivial, where $S^0$ is the identity component of $ker(\mathcal{X})$ and the kernel of $\mathcal{X}_0$. \par
Before starting the analysis of fibrations of affine groups we make a simplifying assumption that the group $G$ is reductive. This assumption is in no loss of generality,from the point of topology, the unipotent radical $G_u$ is contractible thus $G$ and $\frac{G}{G_u}$ have the same homotopy type and from the point of algebraic groups any character of $G$ restricts as the trivial character on $G_u$. From this auxiliary analysis we will show that when the kernel of the associated character is connected then the fibration of homogeneous spaces 
\begin{center}
\begin{tikzcd}
\frac{S}{H}\arrow{r} & \frac{G}{H}\arrow{r}{\mathcal{X}} & \mathbb{C}^*
\end{tikzcd}
\end{center} 
is cohomologically trivial.\par
From the above we will deduce that the fibration
\begin{center}
\begin{tikzcd}
\frac{S}{H} \arrow{r} & \frac{G}{H}\arrow{r} {\mathcal{X}} & \mathbb{C}^*
\end{tikzcd}
\end{center}
 is cohomologically trivial if and only if the natural map$ \frac{G}{S^0 \cap H} \to \frac{G}{H}$ induces an isomorphism $H^*\big(\frac{G}{H}\big) \approx H^* \big( \frac{G}{S^0 \cap H} \big)$. Our proof, in particular, shows that the monodromy group is finite and hence is semisimple.\par
In section \ref{con} we construct a class of fibrations with connected fibres, which are not cohomologically trivial as a converse of sorts for the results in section 3.
In section \ref{arb} we move to the very general setting of homogeneous spaces of algebraic groups, not necessarily affine. We prove some structure theorems for these groups viz. the existence of an unipotent radical and maximal central torus. As before we quotient out the unipotent radical and we will show that our results hold in this general setting for algebraic groups with trivial unipotent radical which we call quasi- reductive group. 
\section{Preliminaries}\label{Prelims}
We will use the following equivalent definition of cohomological triviality of fibrations:
\begin{de}
A fibration $F\hookrightarrow E\rightarrow B$ is said to be rationally cohomologically trivial if it satisfies the K\"unneth formula, i.e. for each $k$
\begin{center}
$H^k(E; \mathbb{Q})\cong$ $\mathop{\oplus} \limits_{p+q=k} H^p(F; \mathbb{Q})\otimes H^q(B; \mathbb{Q})$,\\
\end{center}
the isomorphism is of graded vector spaces.\\
This implies, 
\begin{center}
$b_k(E)$= $\sum\limits_{p+q=k} b_p(F)\cdot b_q(B)$.
\end{center}
for all $k$, where $b_k$ is the kth Betti number.
\end{de}
Following \cite{Damon} we can also define (rational) cohomological triviality of a fibration when the base is $\mathbb{C}^*$as follows.
Let $F\hookrightarrow E\xrightarrow{\pi} \mathbb{C}^*$ be a fibration, let $\alpha(t)= e^{2\pi it}$ be a loop at 1, now consider the monodromy map $\sigma_1$, the lift of $\alpha$ to a family of homeomorphisms $\sigma_t:F\rightarrow F_t= \pi^{-1}\alpha(t)$.
\begin{lemma}\label{le}
A fibration $F\hookrightarrow E\xrightarrow{\pi}\mathbb{C}^*$ is (rationally) cohomologically trivial if $\sigma_1^{\ast}: H^\ast(F;\mathbb{Q})\rightarrow H^\ast(F;\mathbb{Q})$ is the identity.
\end{lemma}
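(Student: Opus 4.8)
The plan is to trade $\mathbb{C}^*$ for a homotopy-equivalent circle and then extract $H^*(E)$ from the monodromy via the Wang exact sequence. First I would observe that $\exp\colon\mathbb{C}\to\mathbb{C}^*$ is the universal cover, and pull the fibration $\pi$ back along it to get a fibration $\widetilde E\to\mathbb{C}$ over a contractible base; that pullback is fibre-homotopy trivial, so $\widetilde E\simeq F$, and $E$ is recovered as the quotient of $\widetilde E$ by the deck $\mathbb{Z}$-action. Up to homotopy this exhibits $E$ as the mapping torus of the monodromy $\sigma_1\colon F\to F$, i.e. as a fibre bundle over $S^1$ (equivalently, one may simply restrict $\pi$ over the unit circle, which is a deformation retract of $\mathbb{C}^*$). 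The reason for passing to this model is that the Serre spectral sequence of a bundle over $S^1$ has only the two columns $p=0,1$, with $E_2^{1,q}=H^1\big(S^1;\mathcal{H}^q(F;\mathbb{Q})\big)$ governed by the $\pi_1(S^1)$-action, which here is $\sigma_1^*$.

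Next I would feed in the hypothesis. Since $\sigma_1^*=\mathrm{id}$ on $H^*(F;\mathbb{Q})$, the coefficient system is trivial and the two-column spectral sequence degenerates; concretely, in the Wang exact sequence
$$\cdots\to H^{k-1}(F)\xrightarrow{\,\sigma_1^*-\mathrm{id}\,}H^{k-1}(F)\xrightarrow{\,\delta\,}H^{k}(E)\xrightarrow{\,i^*\,}H^{k}(F)\xrightarrow{\,\sigma_1^*-\mathrm{id}\,}H^{k}(F)\to\cdots$$
all the maps $\sigma_1^*-\mathrm{id}$ vanish. The sequence therefore breaks into short exact pieces $0\to H^{k-1}(F)\to H^{k}(E)\to H^{k}(F)\to 0$, and since the coefficients form a field these split, giving $H^{k}(E;\mathbb{Q})\cong H^{k}(F;\mathbb{Q})\oplus H^{k-1}(F;\mathbb{Q})$ as graded vector spaces. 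Finally, because $H^*(\mathbb{C}^*;\mathbb{Q})\cong H^*(S^1;\mathbb{Q})$ is one-dimensional in degrees $0$ and $1$ and zero otherwise, the right-hand side is precisely $\bigoplus_{p+q=k}H^p(F;\mathbb{Q})\otimes H^q(\mathbb{C}^*;\mathbb{Q})$, which is the Künneth formula, so the fibration is rationally cohomologically trivial.

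The step that requires genuine care is the first: I must make sure that the $\pi_1(\mathbb{C}^*)$-action on $H^*(F)$ entering the spectral sequence is literally $\sigma_1^*$ — the family of homeomorphisms $\sigma_t$ in the definition of the monodromy provides exactly this identification — and that the splitting of the Wang sequence respects the cohomological grading, so that it matches the Künneth decomposition term by term. Everything afterwards is formal (the degeneration of a two-column spectral sequence, or equivalently the exactness of the Wang sequence), so I expect no real obstacle once the bundle-over-$S^1$ model and its monodromy are pinned down.
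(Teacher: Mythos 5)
Your argument is correct, but it takes a genuinely different route from the paper: the paper's ``proof'' of this lemma is a one-line citation of Proposition~1.8 of Damon, which asserts that the K\"unneth-type definition and the trivial-monodromy condition coincide for fibrations over $\mathbb{C}^*$. What you have done is essentially reprove that cited result from scratch. Your chain of reductions is sound: restricting over the unit circle (a deformation retract of $\mathbb{C}^*$) loses nothing since the inclusion of $\pi^{-1}(S^1)$ into $E$ is a homotopy equivalence; the local system $\mathcal{H}^q(F;\mathbb{Q})$ on $S^1$ is trivial precisely because its monodromy is $\sigma_1^*=\mathrm{id}$; the two-column Serre spectral sequence then degenerates for degree reasons (every $d_r$, $r\geq 2$, shifts the base degree by at least $2$); and the resulting short exact sequences $0\to H^{k-1}(F)\to H^k(E)\to H^k(F)\to 0$ split over the field $\mathbb{Q}$, giving exactly $\bigoplus_{p+q=k}H^p(F;\mathbb{Q})\otimes H^q(\mathbb{C}^*;\mathbb{Q})$ as graded vector spaces --- which is all the paper's definition demands (an isomorphism of graded vector spaces, not of rings, so no multiplicative extension problem arises). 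Note that the paper itself remarks, just after Proposition~3.2, that the cohomological triviality ``can be seen by a Wang sequence argument as given in \cite{Damon}'', so your argument is precisely the content of the black-boxed reference made explicit. What the paper's citation buys is brevity; what your version buys is a self-contained proof and the explicit identification of the $\pi_1(\mathbb{C}^*)$-action on $H^*(F;\mathbb{Q})$ with $\sigma_1^*$, which you rightly flag as the one step needing care.
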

\begin{proof}\renewcommand{\qedsymbol}{$\blacksquare$}
By Proposition 1.8 of \cite{Damon} the definition we have given in the beginning and for the special case when the base space is $\mathbb{C}^*$ coincide.
\end{proof}
Observe that if $E$ is connected and the fibre $F$ is not connected, then the fibration
$F\hookrightarrow E\rightarrow B$ is not cohomologically trivial. This is clear by looking at  $b_0$, the zeroth Betti number. This leads us to make a global assumption that the fibres of any fibration we are considering to be connected.\par

We now give another formulation of cohomological triviality of a fibration $F\rightarrow E\rightarrow B$ as degeneration of the Leray- Serre spectral sequence. Proposition 5.5 in \cite{Mcc} (page 139) states that the $E_2^{r,s}$ term in the Leray- Serre spectral sequence is given by the tensor product $H^*(B; \mathbb{Q})\otimes H^*(F;\mathbb{Q})$ under the conditions that $F$ and $B$ are of finite type and the system of local coefficients on $B$ determined by the fibre $F$ is simple, and the spectral sequence converges to $H^*(E;\mathbb{Q})$. Thus if the spectral sequence degenerates at $E_2$: 
$$H^n(E;\mathbb{Q})=\mathop{\oplus}\limits_{r+s=n}E_\infty^{r,s}=\mathop{\oplus}\limits_{r+s=n} H^r(B; \mathbb{Q})\otimes H^s(F;\mathbb{Q})$$
Conversely, suppose that $F$ and $B$ are of finite type and the system of local coefficients on $B$ determined by the fibre $F$ is simple and the fibration is cohomologically trivial. Then by proposition 5.5 in \cite{Mcc}, we have that $E_2^{r,s}= H^r(B; \mathbb{Q})\otimes H^s(F;\mathbb{Q})$ and the spectral sequence converges to $H^*(E;\mathbb{Q})=\mathop{\oplus}\limits_{r+s=n} H^r(B; \mathbb{Q})\otimes H^s(F;\mathbb{Q})$. Thus the Leray- Serre spectral sequence degenerates at $E_2$.

 If the base is $\mathbb{C}^*$ we have,
\begin{lemma}\label{lem1}
Let $F\rightarrow E\xrightarrow{p}\mathbb{C}^*$ be a fibration, such that the fibre $F$ is not connected, and the total space $E$ and the base be path connected . Then there exists a connected covering $B\rightarrow\mathbb{C}^*$ and a lift $\tilde{p}: E\to B$,  such that the fibre bundle $F_0\rightarrow E\xrightarrow{\tilde{p}}B$ has connected fibre $F_0$.
\end{lemma}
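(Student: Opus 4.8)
The plan is to realise $B$ as the connected covering of $\mathbb{C}^{*}$ corresponding to the subgroup $p_{*}(\pi_{1}(E))\le\pi_{1}(\mathbb{C}^{*})\cong\mathbb{Z}$, and to take $\tilde p$ to be the lift of $p$ provided by the covering space lifting criterion; intuitively, the monodromy $\sigma_{1}$ permutes the components of $F$ cyclically and transitively (transitively because $E$ is connected), so the new fibre $F_{0}$ will be ``one component's worth'' of $F$, and its connectedness will drop out of the homotopy long exact sequence.

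In detail: every subgroup of $\pi_{1}(\mathbb{C}^{*})\cong\mathbb{Z}$ has the form $d\mathbb{Z}$ for a unique integer $d\ge 0$, so write $p_{*}(\pi_{1}(E))=d\mathbb{Z}$ (note $d\ne 1$, since otherwise the tail $\pi_{1}(E)\to\pi_{1}(\mathbb{C}^{*})\to\pi_{0}(F)\to\pi_{0}(E)=*$ of the homotopy sequence of $p$ would force $F$ connected, contrary to hypothesis). Let $q\colon B\to\mathbb{C}^{*}$ be the connected covering with $q_{*}(\pi_{1}(B))=d\mathbb{Z}$: explicitly $B=\mathbb{C}^{*}$ with $q(z)=z^{d}$ when $d\ge 1$, and $B=\mathbb{C}$ with $q(z)=e^{2\pi i z}$ when $d=0$. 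In both cases $B$ is path connected. Since $E$ is path connected and locally path connected (as are all the spaces we consider) and $p_{*}(\pi_{1}(E))\subseteq q_{*}(\pi_{1}(B))$, the lifting criterion yields a continuous $\tilde p\colon E\to B$ with $q\circ\tilde p=p$.

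Next one must check that $F_{0}\hookrightarrow E\xrightarrow{\tilde p}B$ is again a fibre bundle (a Hurewicz fibration in the weaker setting), and here I would use that $q$ is a local homeomorphism: any local trivialisation of $p$ over an evenly covered $V\subseteq\mathbb{C}^{*}$ transports to local trivialisations of $\tilde p$ over the sheets of $q^{-1}(V)$. In the merely Hurewicz case one instead solves a homotopy lifting problem for $\tilde p$ by pushing it forward to one for $p=q\tilde p$ and invoking uniqueness of path lifting through $q$ to see the solution descends. Set $F_{0}=\tilde p^{-1}(b_{0})$; over the connected base $B$ this is ``the'' fibre, and $p^{-1}(q(b_{0}))=F$ is the disjoint union of the fibres $\tilde p^{-1}(b)$ for $b\in q^{-1}(q(b_{0}))$. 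Because $q_{*}$ is injective with image $d\mathbb{Z}=p_{*}(\pi_{1}(E))$ and $q_{*}\tilde p_{*}=p_{*}$, the map $\tilde p_{*}\colon\pi_{1}(E)\to\pi_{1}(B)$ is surjective (trivially so when $d=0$). Feeding this into the exact sequence $\pi_{1}(E)\xrightarrow{\tilde p_{*}}\pi_{1}(B)\xrightarrow{\partial}\pi_{0}(F_{0})\to\pi_{0}(E)$ of pointed sets and using $\pi_{0}(E)=*$ forces $\pi_{0}(F_{0})=*$, i.e.\ $F_{0}$ is connected. The only genuinely nontrivial step is checking that $\tilde p$ inherits the bundle (resp.\ fibration) property from $p$: that is where the local homeomorphism property of $q$, resp.\ uniqueness of lifts through a covering, is essential; the rest is bookkeeping with $\pi_{1}(\mathbb{C}^{*})\cong\mathbb{Z}$ and the homotopy sequence.
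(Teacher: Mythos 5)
Your proposal is correct and follows essentially the same route as the paper: both take $B$ to be the connected covering of $\mathbb{C}^*$ corresponding to $p_*(\pi_1(E))\subseteq\pi_1(\mathbb{C}^*)\cong\mathbb{Z}$, lift $p$ through it, and read off connectedness of the new fibre from the tail of the homotopy exact sequence. You supply some details the paper leaves implicit (the infinite-index case $d=0$, the verification that $\tilde p$ is again a bundle, and the explicit surjectivity of $\tilde p_*$), but the underlying argument is the same.
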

\begin{proof}\renewcommand{\qedsymbol}{$\blacksquare$}
Since $E$ and $\mathbb{C}^*$ are path connected the long exact homotopy sequence of the fibration $F\rightarrow E\xrightarrow{p}B$ reduces to,
\begin{center}
\begin{tikzcd} [cramped, sep=small] 
...\pi_{1}(F,f)\arrow[r] & \pi_1(E,e)\arrow[swap]{r}{p_{*}} & \pi_1(\mathbb{C}^*,1)\arrow[r] & \pi_0(F,f)\arrow[r] & 1\\
\end{tikzcd}
\end {center}
This sequence shows, in particular, that $p_*$ is surjective if and only if the fibre $F$ is connected.\par
Let the number of path components of $F$ be $n$, then 
$Im(\pi_1(E))$ is a subgroup of $\pi_1(\mathbb{C}^*)\approx \mathbb{Z}$, of index $n$. Hence we have the following commutative diagram,
\begin{center}
\begin{tikzcd}
& & \mathbb{C}^*\arrow[ d,xshift=1mm,"z\mapsto z^n"]\\
F\arrow[swap]{r} & E\arrow[swap]{r}{p}\arrow[ur, dotted, "\tilde{p}"] & \mathbb{C}^*
\end{tikzcd}
\end{center}
where the dotted arrow is the lift of map $p$, which exists as the map
$\mathbb{C}^*\rightarrow\mathbb{C}^*$ is a covering map. Now the fibre bundle $E\xrightarrow{\tilde{p}}\mathbb{C}^*$ has connected fibre.
\end{proof}

Let us now look at some examples of fibrations and check cohomolgical triviality.
\begin{eg}\label{eg1}
The Hopf fibration
\begin{center}
$S^1\longrightarrow S^3\longrightarrow S^2$
\end{center}
is not cohomologically trivial, for the term $H^2(S^3)$ is zero, 
whereas \\$H^2(S^2)\otimes H^0(S^1)= \mathbb{Z}$.
\end{eg}
We now give an example of a fibration which is cohomologically trivial
\begin{eg}\label{eg2}
For any $n$, the exact sequence of algebraic groups,
\begin{center}
$1\rightarrow SL_n\rightarrow GL_n\xrightarrow{det}\mathbb{C}^*\rightarrow 1$
\end{center}
is cohomologically trivial, where $det$ is the determinant character. For this the cohomologies of $GL_n$ and $SL_n$ are  exterior algebra on odd degree generators with the cohomology of  $SL_n$ beginning at degree 3 and $GL_n$ beginning at degree 1. For details refer \cite{Toda} and \cite{Damon}.
Furthermore, note that $GL_n$ is a product of $SL_n$ and $\mathbb{C}^*$ as manifolds but not as groups.
\end{eg}
\begin{eg}\label{eg3}
Similarly by looking at the cohomology algebra, the fibration
\begin{center}
$\frac{SL_n}{SO_n}\rightarrow\frac{GL_n}{O_n}\rightarrow\mathbb{C}^*$
\end{center}
is not cohomologically trivial for $n= 2m$ and is cohomologically trivial for $n= 2m+1$. We refer \cite{Damon} and \cite{Toda} for the details.
\end{eg}
\begin{eg}\label{eg4}
But, the fibration $\frac{SL_n}{SO_n}\rightarrow\frac{GL_n}{SO_n}\rightarrow\mathbb{C}^*$ is cohomologically trivial for all $n$.  We refer to \cite{Toda} for the details.
\end{eg}

We shall now prove some preliminary results,

\begin{lemma}\label{lem2}
Consider the following, 
$H\hookrightarrow G\rightarrow \frac{G}{H}$,  where $G$ is an affine algebraic group and $H$ is a closed subgroup of $G$. Then 
$H\rightarrow G\rightarrow\frac{G}{H}$ is a fibre bundle.
\end{lemma}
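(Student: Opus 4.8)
The plan is to produce, around every point of $G/H$, a local trivialization of the projection $\pi\colon G\to G/H$ arising from a local section of $\pi$.

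First I would recall the structure of the quotient. By Chevalley's theorem the homogeneous space $G/H$ carries the structure of a smooth quasi-projective variety for which $\pi\colon G\to G/H$ is a surjective morphism of varieties, and since we work over $\mathbb{C}$ (characteristic $0$) this morphism is smooth; equivalently, passing to the underlying complex manifolds, $\pi$ is a holomorphic submersion whose fibres are the cosets $gH$, each biholomorphic to $H$. Next I would invoke the holomorphic implicit function theorem (rank theorem): because $\pi$ is a submersion, for every $x\in G/H$ there is an open neighbourhood $V\subseteq G/H$ and a holomorphic section $s\colon V\to G$ with $\pi\circ s=\mathrm{id}_V$.

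Given such a section, define
\[
\varphi\colon V\times H\longrightarrow \pi^{-1}(V),\qquad \varphi(x,h)=s(x)\,h .
\]
This map is continuous since multiplication in $G$ is continuous; it is a bijection onto the open set $\pi^{-1}(V)$ because every $g\in\pi^{-1}(V)$ can be written $g=s(\pi(g))\cdot\big(s(\pi(g))^{-1}g\big)$ with $s(\pi(g))^{-1}g\in H$; and its inverse $g\mapsto\big(\pi(g),\,s(\pi(g))^{-1}g\big)$ is continuous because $\pi$, $s$, inversion and multiplication are all continuous. Hence $\varphi$ is a homeomorphism over $V$, i.e. a local trivialization, and since $H$ acts on the fibres by right multiplication one even gets that $\pi$ is a principal $H$-bundle. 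Letting $x$ range over $G/H$ yields an open cover over which $\pi$ is trivial, which is precisely the assertion that $H\hookrightarrow G\to G/H$ is a fibre bundle.

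The only delicate point is the existence of the local sections $s$: this is where characteristic $0$ (smoothness of $\pi$) together with the passage to the analytic topology is essential — Zariski-locally such sections need not exist, so one cannot argue purely algebraically. I would flag this explicitly. As an alternative to the implicit-function-theorem argument, one may simply quote the classical fact from Lie theory that for a Lie group $G$ and a closed subgroup $H$ the quotient map $G\to G/H$ is a principal $H$-bundle, applied to the real Lie group underlying the complex affine algebraic group $G$ and its closed Lie subgroup $H$.
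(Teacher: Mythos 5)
Your argument is correct. The paper itself does not give a proof here: it simply cites Schmitt (Example 2.1.1.4 (ii)), so you have supplied in full the standard argument that the reference is being asked to carry. Your route --- Chevalley's theorem to get the quotient variety, smoothness of $\pi$ in characteristic $0$, passage to the analytic topology, local holomorphic sections from the implicit function theorem, and the trivialization $\varphi(x,h)=s(x)h$ with its explicit continuous inverse $g\mapsto(\pi(g),\,s(\pi(g))^{-1}g)$ --- is exactly the classical proof, and you correctly isolate the one delicate point, namely that local sections exist analytically but not Zariski-locally, so the statement is about the underlying topological (indeed principal $H$-) bundle rather than a Zariski-local triviality. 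That is precisely the form in which the lemma is used later in the paper (for the long exact homotopy sequence and the Leray--Serre spectral sequence), so nothing is lost. Your closing alternative, quoting the Lie-theoretic fact that $G\to G/H$ is a principal $H$-bundle for a closed subgroup $H$ of a Lie group, is an equally valid and slightly quicker way to discharge the lemma; the only thing it costs is that one must still identify the algebraic quotient $G/H$ with the Lie-group quotient, which your first argument handles implicitly via Chevalley's theorem.
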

\begin{proof}\renewcommand{\qedsymbol}{$\blacksquare$}
See page 105, Example 2.1.1.4 (ii) for a proof in \cite{Schmitt}.
\end{proof}
We note that in the above lemma the group $G$ being affine is not necessary.
\begin{lemma}\label{lem5}
Let $G$ be an algebraic group and $\mathcal{X}: G\longrightarrow\mathbb{C}^*$ be a non constant morphism of algebraic varieties, taking identity to identity. Then $\mathcal{X}$ is a character.
\end{lemma}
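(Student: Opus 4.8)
The plan is to reduce the assertion to Rosenlicht's description of the invertible regular functions on a connected affine algebraic group. Since $\mathbb{C}^{*}=\operatorname{Spec}\mathbb{C}[t,t^{-1}]$, a morphism of varieties $\mathcal{X}\colon G\to\mathbb{C}^{*}$ is the same datum as the regular function $f:=\mathcal{X}^{*}(t)$ on $G$, which is automatically nowhere vanishing, i.e. a unit $f\in\mathcal{O}(G)^{*}$; the hypothesis $\mathcal{X}(e)=1$ reads $f(e)=1$, and $\mathcal{X}$ is a character exactly when $f$ is a group homomorphism into the multiplicative group. Here I use that $G$ is affine; in this paper $G$ is always taken connected, and in the later non-affine setting one first passes to $G/G_{u}$, so there is no loss in assuming $G$ connected and affine. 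Thus it suffices to prove: a unit $f\in\mathcal{O}(G)^{*}$ with $f(e)=1$ on a connected affine group $G$ satisfies $f(xy)=f(x)f(y)$.

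First I would pull $f$ back along the multiplication map $m\colon G\times G\to G$ to obtain the unit $m^{*}f\in\mathcal{O}(G\times G)^{*}$, namely $(x,y)\mapsto f(xy)$. The essential input is \emph{Rosenlicht's lemma on units of products}: if $X$ and $Y$ are irreducible varieties then every unit on $X\times Y$ has the form $a(x)\,b(y)$ with $a\in\mathcal{O}(X)^{*}$ and $b\in\mathcal{O}(Y)^{*}$, uniquely up to a reciprocal scalar. Since $G$ is connected, $G$ and $G\times G$ are irreducible, so there are units $a,b\in\mathcal{O}(G)^{*}$ with $f(xy)=a(x)b(y)$ identically on $G\times G$.

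It then remains only to normalise. Putting $y=e$ gives $f(x)=a(x)b(e)$; putting $x=e$ gives $f(y)=a(e)b(y)$; and putting $x=y=e$ gives $f(e)=a(e)b(e)$. Hence $f(xy)=a(x)b(y)=\big(f(x)/b(e)\big)\big(f(y)/a(e)\big)=f(x)f(y)/f(e)=f(x)f(y)$, using $f(e)=1$. Therefore $f$, and with it $\mathcal{X}$, is a homomorphism into $\mathbb{C}^{*}$, i.e. a character; the non-constancy hypothesis is used only to guarantee that this character is non-trivial, which is what makes the associated fibration studied later of interest.

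The one substantive ingredient, and the only place where real content enters, is Rosenlicht's lemma quoted above; it is classical and may simply be cited (it is the rigidity underlying the fact that the character group $X^{*}(G)$ accounts for all units of a connected group, $\mathcal{O}(G)^{*}/\mathbb{C}^{*}$ being free of finite rank). An equivalent route avoiding the product is to note that $g\mapsto\big[\,x\mapsto f(gx)\,\big]$ takes values in the discrete group $\mathcal{O}(G)^{*}/\mathbb{C}^{*}$ and varies algebraically, hence is constant on the connected $G$, which after evaluation at $x=e$ again yields $f(gx)=f(g)f(x)$. Connectedness of $G$ is genuinely needed here: for disconnected $G$ the statement is false, since one readily writes down a non-multiplicative morphism to $\mathbb{C}^{*}$ fixing the identity, for instance on $G=(\mathbb{Z}/2\mathbb{Z})^{2}$.
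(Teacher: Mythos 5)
Your argument is correct, but it is worth saying how it relates to what the paper does: the paper's ``proof'' of this lemma is a pure citation (Theorem~3 of Rosenlicht's \emph{Toroidal algebraic groups}, or Corollary~1.2 of Conrad's note \emph{Units on product varieties}, where the statement is called the Rosenlicht unit theorem). What you have written is, in effect, the standard proof of that cited theorem: pull the unit $f$ back along multiplication, invoke Rosenlicht's lemma that a unit on a product of irreducible varieties splits as $a(x)b(y)$ up to a scalar, and normalise at the identity using $f(e)=1$. The normalisation computation $f(xy)=a(x)b(y)=f(x)f(y)/f(e)$ is exactly right, and your remark that connectedness is genuinely needed (the statement fails for disconnected groups) is a point the paper's bare statement glosses over --- connectedness is a standing hypothesis elsewhere in the paper and is what makes $G\times G$ irreducible here. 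One small correction to your framing: you restrict to $G$ affine so as to speak of $f$ as an element of the coordinate ring, but the units-on-products lemma holds for arbitrary (geometrically) integral varieties, with $f\in\mathcal{O}(G)^{*}$ a global invertible regular function; this matters because the paper later applies the lemma to homogeneous spaces and to not-necessarily-affine groups in Section~5, and your proof in fact covers those cases without the detour through $G/G_{u}$. So the only black box left in your write-up is the product lemma itself, which is precisely the content of the reference the paper cites; depending on taste one either cites it, as the paper does, or proves it by the valuation-theoretic argument of Rosenlicht, which is genuinely nontrivial and should not be treated as formal.
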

\begin{proof}\renewcommand{\qedsymbol}{$\blacksquare$}
This is a particular case of theorem 3 in \cite{Rosen}. For a modern proof we refer to corollary 1.2 in the  unpublished notes of Brian Conrad \cite{Bcnrd1}, where this result is referred to as Rosenlicht unit theorem.
\end{proof}

\section{Cohomological triviality of characters }\label{Ctr}
Let $\mathcal{X}$ be any non trivial map from $ G$ onto $\mathbb{C}^* $ then by the lemma \ref{lem5}, $\mathcal{X}$ is a non trivial character on $G$ . We note that there can be no non- constant algebraic maps from an unipotent group $U$ to $\mathbb{C}^*$, in particular the unipotent radical $G_u$ of $G$ is in the kernel of $\mathcal{X}$ and $\mathcal{X}$ factors through $\frac{G}{G_u}$. Furthermore topologically $G_u$ is an affine space and hence is contractible thus $G$ and $\frac{G}{G_u}$ are of the same homotopy type. Thus it suffices to look at reductive groups for the question of cohomological triviality. We record this as a lemma.\par
\begin{lem}\label{lem3.1}
Let $G$ be an arbitrary linear algebraic group and $\mathcal{X}:G\rightarrow\mathbb{C}^*$ be a non-trivial character with connected kernel $S$.
Then the fibration
\begin{center}
$S\rightarrow G\rightarrow\mathbb{C^*}$
\end{center}
is homotopic to the fibration
\begin{center}
\begin{tikzcd}
\frac{S}{G_u}\arrow{r}{} & \frac{G}{G_u}\arrow{r}{\mathcal{X}} & \mathbb{C}^*
\end{tikzcd}
\end{center}
\hfill $\blacksquare$
\end{lem}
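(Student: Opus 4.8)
The plan is to realize both fibrations as the two rows of a commuting ladder whose vertical maps are the quotient projections by the unipotent radical $G_u$, and then to observe that these projections are homotopy equivalences. First I would collect the purely algebraic ingredients. A unipotent group admits no non-constant morphism to $\mathbb{C}^*$, so $G_u\subseteq S=\ker(\mathcal{X})$; since $G_u$ is normal in $G$ it is in particular a closed normal subgroup of $S$, the quotients $G/G_u$ and $S/G_u$ exist as algebraic groups, and $\mathcal{X}$ descends to a character $\overline{\mathcal{X}}\colon G/G_u\to\mathbb{C}^*$ with $\ker(\overline{\mathcal{X}})=S/G_u$. Identifying $\mathbb{C}^*\cong G/S\cong (G/G_u)/(S/G_u)$, Lemma \ref{lem2} (together with the remark there that $G$ need not be affine) exhibits both rows of
\begin{center}
\begin{tikzcd}
S \arrow[r] \arrow[d, "q_S"'] & G \arrow[r, "\mathcal{X}"] \arrow[d, "q_G"'] & \mathbb{C}^* \arrow[d, "\mathrm{id}"] \\
\frac{S}{G_u} \arrow[r] & \frac{G}{G_u} \arrow[r, "\overline{\mathcal{X}}"] & \mathbb{C}^*
\end{tikzcd}
\end{center}
as fibre bundles, with the left and right squares commuting by construction and $q_S$ being the restriction of $q_G$ to $S$.

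Next I would supply the topological input. Again by Lemma \ref{lem2}, the projections $q_G\colon G\to G/G_u$ and $q_S\colon S\to S/G_u$ are fibre bundles with fibre $G_u$. Over $\mathbb{C}$ a unipotent group is isomorphic, as a variety, to its Lie algebra, hence to an affine space, and is therefore contractible. A fibre bundle with contractible fibre over a base having the homotopy type of a CW complex induces, via the long exact sequence of homotopy groups, isomorphisms on all $\pi_n$, and hence — by Whitehead's theorem, the relevant spaces being smooth (affine) varieties — its projection is a homotopy equivalence. So $q_G$ and $q_S$ are homotopy equivalences, and by the compatibility noted above the ladder is a morphism of fibrations that is the identity on the base and a homotopy equivalence on total spaces and on fibres, compatibly. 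This is exactly the assertion that the two fibrations are homotopic.

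The one point that needs genuine care — everything else being bookkeeping with the structure theory already invoked in the text — is the passage from ``contractible fibre'' to ``projection is a homotopy equivalence'', together with the claim that the equivalence realizing $q_G$ restricts over the relevant subspace to the one realizing $q_S$; this relies on $G/G_u$ and $S/G_u$ being paracompact and of CW homotopy type so that Whitehead applies. Alternatively, one may invoke Mostow's Levi decomposition to write $G\cong (G/G_u)\times G_u$ as varieties (and argue similarly for $S$), which displays $q_G$ directly as a trivial bundle projection onto a retract and sidesteps Whitehead altogether.
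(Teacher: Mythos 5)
Your argument is correct and follows the same route the paper takes (the lemma is recorded there without a written proof, resting on the preceding observation that $G_u$ is contractible and lies in $\ker(\mathcal{X})$): you simply make explicit the commuting ladder of fibre bundles and the fact that the quotient maps $q_G$, $q_S$, having contractible fibre $G_u$, are homotopy equivalences. The extra care you take with Whitehead's theorem, or alternatively with the Levi decomposition $G\cong (G/G_u)\times G_u$ as varieties, is a legitimate filling-in of details the paper leaves implicit.
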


Throughout the section we will assume that $G$ is a connected reductive group by the reasoning of lemma \ref{lem3.1}. 
\begin{prop} \label{prop1}
Let $G$ be a reductive group, and $\mathcal{X}: G\longrightarrow\mathbb{C}^*$ be a non trivial character with the kernel $S$, which we assume to be connected. Then the fibration
\begin{center}
$1\rightarrow S\rightarrow G\rightarrow\mathbb{C}^*\rightarrow 1$
\end{center} 
is cohomologically trivial.
\end{prop}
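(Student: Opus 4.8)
The plan is to use the criterion from Lemma \ref{le}: it suffices to show that the monodromy action of $\pi_1(\mathbb{C}^*) \cong \mathbb{Z}$ on $H^*(S;\mathbb{Q})$ is trivial. Since $\mathcal{X}: G \to \mathbb{C}^*$ is a surjective homomorphism of algebraic groups with kernel $S$, this is a principal bundle in a very strong sense — $G$ is a group and $S$ a subgroup acting by translation — so the monodromy should be realized by conjugation, which on cohomology of a connected group is trivial. Let me make this precise.

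First I would choose a lift of the generator of $\pi_1(\mathbb{C}^*,1)$: pick a one-parameter subgroup, i.e. a cocharacter $\lambda: \mathbb{C}^* \to G$ with $\mathcal{X} \circ \lambda: z \mapsto z^m$ for some nonzero integer $m$ (such $\lambda$ exists because $G$ is reductive, hence $\mathcal{X}$ restricted to a maximal torus is nontrivial and cocharacters surject onto characters up to finite index; alternatively lift along the universal cover $\mathbb{C} \to \mathbb{C}^*$ and use that $G$ is connected so the exponential/one-parameter subgroup through any Lie algebra element exists). Using $\lambda$, the path $t \mapsto \lambda(e^{2\pi i t/m})$ in $G$ (or rather the family of left-translations $g \mapsto \lambda(e^{2\pi i t/m}) g$) gives an explicit lift of the loop $\alpha$ to a family of homeomorphisms of the fibers: the monodromy $\sigma_1: S \to S$ is then $s \mapsto \lambda(\zeta) s$ where $\zeta = e^{2\pi i}= 1$ — wait, one must be careful: the fiber over $1$ is $S$ itself, and $\lambda(e^{2\pi i t/m})$ traces a loop in $\mathbb{C}^*$-image that closes up, so the endpoint $\lambda(1) = e$ lies in $S$ but the whole family need not preserve $S$. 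The correct fix is to combine left translation by $\lambda(e^{2\pi i t/m})$ with a correction keeping us in the fiber; the net monodromy ends up being conjugation by $\lambda(1)=e$ composed with... Instead, the cleanest route: left-translation by the loop $\lambda(e^{2\pi i t/m})$ maps the fiber $\mathcal{X}^{-1}(z)$ to $\mathcal{X}^{-1}(z \cdot e^{2\pi i t})$, and after $t$ runs $0$ to $1$ we return to the fiber $\mathcal{X}^{-1}(z)$; restricting to $z=1$ gives a self-map of $S$ which is left-translation by $\lambda(1) = e$, i.e.\ the identity. Thus $\sigma_1 = \mathrm{id}_S$, so $\sigma_1^* = \mathrm{id}$ on $H^*(S;\mathbb{Q})$, and Lemma \ref{le} finishes the proof.

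The step I expect to be the main obstacle — and the one to write carefully — is constructing the lift $\lambda$ of the loop and verifying that the associated family of homeomorphisms genuinely realizes the monodromy (i.e.\ that it is a lift of $\alpha$ through the bundle projection in the sense of Lemma \ref{le}, a homotopy lifting argument), and that its time-$1$ map is the identity on $S$ rather than some nontrivial translation or conjugation. One subtlety: $\mathcal{X}\circ\lambda$ has degree $m$, so the loop $t\mapsto \lambda(e^{2\pi i t/m})$ maps down to the loop $\alpha(t)=e^{2\pi i t}$ exactly once around; care with this reparametrization is needed. An alternative, perhaps safer, formulation avoids picking $\lambda$: note $S \hookrightarrow G \to \mathbb{C}^*$ is a fiber bundle with structure group $S$ acting on itself by left translation, so it is a principal $S$-bundle; pulling back along $\mathbb{C} \to \mathbb{C}^*$ it becomes trivial, and the monodromy is translation by the image in $\pi_0(S)$ of the clutching element — but $S$ is connected by hypothesis, so $\pi_0(S)$ is trivial and the monodromy is (homotopic to) the identity. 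Either way the key inputs are: connectedness of $S$, the group structure making left-translation available, and Lemma \ref{le}. I would present the one-parameter subgroup argument as the main proof and remark on the principal-bundle viewpoint.
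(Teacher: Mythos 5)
Your overall strategy is sound and, at its core, the same as the paper's: realize the monodromy of $S\to G\to\mathbb{C}^*$ as translation of the fibre $S$ by an element of $S$, and use connectedness of $S$ to conclude that this translation is homotopic to the identity, so that Lemma \ref{le} applies. However, your main write-up contains a concrete error at the decisive step. With $\mathcal{X}\circ\lambda(z)=z^m$, the lifted family is left translation by $\lambda(e^{2\pi i t/m})$, and at $t=1$ the translating element is $\lambda(e^{2\pi i/m})$, \emph{not} $\lambda(1)=e$: the path $t\mapsto e^{2\pi i t/m}$ does not close up in $\mathbb{C}^*$ unless $m=\pm 1$ (it is the loop $\alpha$ only after applying the degree-$m$ map). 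So $\sigma_1$ is left translation of $S$ by the element $\zeta=\lambda(e^{2\pi i/m})$, an $m$-th root of unity in the image of $\lambda$ which lies in $S$ but is in general nontrivial; your claim that $\sigma_1=\mathrm{id}_S$ on the nose is false (e.g.\ for $G=GL_n$ with $\mathcal{X}=\det$ and $\lambda$ the central cocharacter, $m=n$ and $\zeta=e^{2\pi i/n}I$). The correct conclusion is only that $\sigma_1$ is \emph{homotopic} to the identity, via a path in the connected group $S$ from $\zeta$ to $e$; this is exactly the fix you sketch in your ``principal bundle'' alternative, and it is what the paper does. Once that sentence is repaired your argument is complete.

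As for the comparison: the paper chooses $\lambda$ to be a \emph{central} cocharacter (available because $G$ is reductive, so $\mathcal{X}$ factors through the isogeny with $Z(G)^0$) and forms the fibre product $\tilde G=G\times_{\mathbb{C}^*}\mathbb{C}^*$, showing $\tilde G\cong S\times\mathbb{C}^*$ as groups; this yields the stronger statement that the fibration becomes a product after a finite cover, which is reused later (Corollary \ref{coro}, the remark on semisimple monodromy, and Section \ref{arb}). Your route needs only \emph{some} cocharacter pairing nontrivially with $\mathcal{X}$ (centrality is irrelevant for the monodromy computation, since $\mathcal{X}(\lambda(w)g)=w^m\mathcal{X}(g)$ already shows left translation permutes the fibres correctly), so it is marginally more elementary, but it does not by itself give the finite-cover splitting.
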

\begin{proof}\renewcommand{\qedsymbol}{$\blacksquare$}
By lemma \ref{lem2} the sequence $1\rightarrow S\rightarrow G\rightarrow\mathbb{C}^*\rightarrow 1$ is a fibre bundle. 
Since $G$ is reductive there is a central $\mathbb{C}^*$ which surjects onto $\mathbb{C}^*$ under $\mathcal{X}$. This follows from the fact that a character of a reductive group is identically zero on the derived group and hence factors through $\frac{G}{D(G)}$, where $D(G)$ is the derived group of $G$ and $\frac{G}{D(G)}$ is isogenous with the identity component of the center $Z(G)$. Thus if the character is non trivial we can find a central $\mathbb{C}^*$ in $G$ that surjects onto $\mathbb{C}^*$. This $\mathbb{C}^*$ can be seen to be as follows- consider the restriction map $\mathcal{X}: Z(G)^0\rightarrow \frac{Z(G)^0}{ker(\mathcal{X})^0}\xrightarrow{}\frac{Z(G)^0}{ker(\mathcal{X})}$ the last two terms are isomorphic to $\mathbb{C}^*$ and hence the map is given by $z\mapsto z^d$ now choose any splitting of $\mathcal{X}: Z(G)^0\rightarrow \frac{Z(G)^0}{ker(\mathcal{X})}$. i.e.
\begin{center}
\begin{tikzcd}
S \arrow{r}{} 
& G \arrow{r}{\mathcal{X}} 
& \mathbb{C}^*\\
& \mathbb{C}^* \arrow[hookrightarrow]{u}{}  \arrow[swap]{ur}{\mathcal{X}|_{\mathbb{C}^*}}
\end{tikzcd}
\end{center}
The kernel of the character $\mathcal{X}|_{\mathbb{C}^*}:\mathbb{C}^*\longrightarrow\mathbb{C}^*$ is a finite cyclic group, say $\Gamma$, 
which induces a fibre product $\tilde{G}$ in  $G\times \mathbb{C}^*$
\begin{center}
\begin{tikzcd}
S \arrow{r}{}  \arrow [swap]{d}{}  
& \tilde G \arrow{r}{}  \arrow [swap]{d}{\pi_1} \arrow[dr, phantom, "\square"] 
& \mathbb{C}^* \arrow{d}{\mathcal{X}|_{\mathbb{C}^*}} \\
S \arrow{r}{}  
& G   \arrow [swap]{r}{\mathcal{X}} 
&\mathbb{C}^* 
\end{tikzcd}
\end{center}
The cover $\tilde{G}$, contained  in $G\times \mathbb{C}^*$, is connected (follows by looking at the long exact sequence of homotopy groups associated to the fibration and since the fibre $S$ and the base space are connected) and abstractly isomorphic to $S\times \mathbb{C}^*$ by the map $f: S\times \mathbb{C}^*\longrightarrow\tilde{G}$ given by $(s,t)\longmapsto (st,t)$. This map is also a group homomorphism, which can be seen as follows
$(s_1,t_1)(s_2,t_2) \mapsto (s_1t_1s_2t_2,t_1t_2)$ and since the chosen $\mathbb{C}^*$ is central in $G$ we have,
$(s_1t_1s_2t_2,t_1t_2)= (s_1s_2t_1t_2,t_1t_2)$ and the other way, $(s_1,t_1)(s_2,t_2)= (s_1s_2,t_1t_2) \mapsto (s_1s_2t_1t_2,t_1t_2)$.

Thus the first projection $\pi_1$:$\tilde{G}\longrightarrow G$ can
be thought of as twisted by the isomorphism $f$, and hence by the identification by $f$ we can see that the upper fibration, $S\to \bar{G}\to \mathbb{C}^*$ is not just
cohomologically trivial but is actually a trivial fibration. 
Now consider any loop in $\mathbb{C}^*$, then its lift to $\mathbb{C}^*$ is a path from 0 to $\zeta$ where $\zeta$ is a root of unity (corresponding an element of the finite cyclic group $\Gamma$). Since $\zeta\in S$  we have a self map of $S$ given by translation by $\zeta$ and since $S$ is path connected we have that the translations are null-homotopic, therefore the monodromy of the fibration $\tilde{G}\to \mathbb{C}^*$ is trivial. Thus by commutativity of the second diagram the fibration $G\to\mathbb{C}^*$ is also cohomologically trivial by lemma \ref{le}
\end{proof}
This gives another proof for the cohomological triviality of the fibration given in example 2.5 of section 2.\par
We can see the cohomological triviality of the fibration $S\to G\to \mathbb{C}^*$  by a Wang sequence argument as given in \cite{Damon}, considering a fibration of maximal compact subgroups and exploiting the Hopf algebra structure of the cohomology.

\begin{defn}
A character $\mathcal{X}$ is said to be split if $G= S\times\mathbb{C}^*$, where $S$ is the kernel of $\mathcal{X}$. A character $\mathcal{X}$ is said to be $\textit{quasi- split}$ if $\mathcal{X}$ is split after a finite covering.
\end{defn}
As a corollary to the method of proof in the proposition \ref{prop1} we have
\begin{coro}\label{coro}
A character $\mathcal{X}: G\rightarrow\mathbb{C}^*$ is split if and only if $\mathcal{X}|_{Z(G)^0}$ is primitive.
\end{coro}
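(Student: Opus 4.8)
The plan is to reinterpret the word \emph{split} in purely group-theoretic terms and then match it with primitivity through the standard duality between characters and cocharacters of a torus. Recall that a character $\chi$ of a torus $T$ is \emph{primitive} exactly when it is part of a $\mathbb{Z}$-basis of $X^*(T)$; equivalently $\ker\chi$ is connected (a subtorus); equivalently there is a cocharacter $\lambda\in X_*(T)$ with $\chi\circ\lambda=\mathrm{id}_{\mathbb{C}^*}$. In the notation of the proof of Proposition~\ref{prop1}, where the restriction $\mathcal{X}\colon Z(G)^0\to \tfrac{Z(G)^0}{\ker(\mathcal{X})^0}\to\tfrac{Z(G)^0}{\ker(\mathcal{X})}$ is described as $z\mapsto z^d$, primitivity of $\mathcal{X}|_{Z(G)^0}$ is precisely the condition $d=1$. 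So I would prove the two-step chain: $\mathcal{X}$ is split $\iff$ $\mathcal{X}$ admits a section by a central $\mathbb{C}^*$ $\iff$ $\mathcal{X}|_{Z(G)^0}$ is primitive (the second equivalence being immediate, since a central $\mathbb{C}^*$ in $G$ is automatically contained in $Z(G)^0$, and a section by such a subgroup is the same as a cocharacter section of $\mathcal{X}|_{Z(G)^0}$).

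For the forward implication I would assume $G=S\times\mathbb{C}^*$ with $S=\ker\mathcal{X}$. Then $Z(G)^0=Z(S)^0\times\mathbb{C}^*$, and since $\mathcal{X}$ kills $S$ and restricts to an injective, hence bijective, endomorphism of the $\mathbb{C}^*$-factor, $\mathcal{X}|_{Z(G)^0}$ is (up to identification) the projection onto the second factor; its kernel $Z(S)^0$ is connected, so $\mathcal{X}|_{Z(G)^0}$ is primitive. This direction is just bookkeeping about centers of direct products. For the converse, primitivity of $\mathcal{X}|_{Z(G)^0}$ supplies a cocharacter $\lambda\colon\mathbb{C}^*\hookrightarrow Z(G)^0$ with $\mathcal{X}\circ\lambda=\mathrm{id}$ — concretely the central one-parameter subgroup produced in the proof of Proposition~\ref{prop1}, which now, with $d=1$, maps isomorphically onto $\mathbb{C}^*$. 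Writing $Z_1:=\lambda(\mathbb{C}^*)$, which is central in $G$, the multiplication map $\mu\colon S\times Z_1\to G$ is a morphism of algebraic groups, and I would check it is an isomorphism: it is surjective because $g=\big(g\,\lambda(\mathcal{X}(g))^{-1}\big)\cdot\lambda(\mathcal{X}(g))$ with the first factor in $S$, and injective because $S\cap Z_1=\ker(\mathcal{X}|_{Z_1})=\{1\}$, as $\mathcal{X}|_{Z_1}$ is inverse to $\lambda$ and hence an isomorphism. Therefore $G\cong S\times\mathbb{C}^*$, i.e.\ $\mathcal{X}$ is split.

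I do not expect a genuine obstacle here: the one place needing a little care is the torus-theoretic fact that primitivity of a character is equivalent to the existence of a cocharacter section (if $\chi=d\chi_0$ with $d>1$, every cocharacter pairs with $\chi$ to a multiple of $d$, so no section exists), and this is exactly the $z\mapsto z^d$ analysis already carried out in Proposition~\ref{prop1}. Once that is in place, both implications are short, and no input beyond the structure theory already used — a character of a reductive group kills the derived group and is therefore governed by $Z(G)^0$ — is required; this is, as stated, a corollary of the method of Proposition~\ref{prop1}.
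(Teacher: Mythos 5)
Your proof is correct and follows essentially the same route as the paper: the forward direction reads off connectedness of $\ker(\mathcal{X}|_{Z(G)^0})=Z(S)^0$ from the decomposition $Z(G)^0=Z(S)^0\times\mathbb{C}^*$, and the converse produces a central one-parameter section of $\mathcal{X}$ and checks that the multiplication map $S\times Z_1\to G$ — which is exactly the paper's composite $\pi_1\circ f$ on the fibre product — is a bijective homomorphism. The only cosmetic difference is that you bypass the fibre-product formalism of Proposition \ref{prop1} and make the character/cocharacter duality on the central torus explicit.
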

\begin{proof}\renewcommand{\qedsymbol}{$\blacksquare$}
 Note that $Z(G)^0= Z(S)^0\times \mathbb{C}^*$.\par
Now suppose that the character $\mathcal{X}$ is split, then we have an isomorphic image of $\mathbb{C}^*$, say $G_m$, in $Z(G)^0$, i.e there is a section of $\mathcal{X}$ restricted to $Z(G)^0$. Thus $\frac{Z(G)^0}{ker(\mathcal{X}|_{Z(G)^0})}\approx \mathbb{C}^*$, i.e. $\mathcal{X}|_{Z(G)^0}$ is primitive.\par
Suppose that the restriction of $\mathcal{X}$ to $Z(G)^0$ is primitive, i.e. the kernel $S_Z$ is connected and hence, by corollary in \cite{Borel} page 118, $S_Z$ is a torus and a direct factor in $Z(G)^0$. Thus $Z(G)^0= S_Z\times G_m$ and since $ker(\mathcal{X})$ is connected ,this $G_m$ maps isomorphically onto $\mathbb{C}^*$, and by the proof of proposition \ref{prop1} we have that the fibre product $\tilde{G}\approx S\times \mathbb{C}^*$ is isomorphic to $G$. By the proposition we need to just prove the injectivity of the composite of $f$ with the projection map $\pi_1$. Let $(s_1,t_1)$ and $(s_2,t_2)$ be two arbitrary elements of $S\times G_m$ which map to the same element in $G$, i.e. $s_1t_1=s_2t_2$. Now, $\mathcal{X}(s_1t_1) =\mathcal{X}(s_2t_2)$ and since $\mathcal{X}$ is a homomorphism  and $s_1$, $s_2$ are in the kernel of $\mathcal{X}$ we have that $\mathcal{X}(t_1)=\mathcal{X}(t_2)$, and since $\mathcal{X}$ restricted to $G_m$ is an isomorphism we have that $t_1=t_2$, from this it follows that $s_1=s_2$ and thus the composite is injective and hence the character is split.
\end{proof}
This corollary, in particular, shows that $GL_n$ with the determinant character is not split.

\begin{coro}
Consider a fibration of homogeneous spaces of a reductive group $G$
\begin{center}
$\frac{S}{H}\rightarrow\frac{G}{H}\rightarrow\mathbb{C}^*$.
\end{center}
Suppose that the homogeneous space $\frac{S}{H}$ is connected and that the subgroup $H$ is normal in $G$, then the fibration is cohomologically trivial.
\end{coro}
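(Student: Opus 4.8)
The point is that, because $H$ is normal in $G$, the whole situation takes place inside the quotient group $\bar G:=G/H$, and the corollary is just Proposition \ref{prop1} applied to $\bar G$. Concretely: since $H$ is a closed normal subgroup of the connected reductive group $G$, the quotient $\bar G$ is a connected affine algebraic group, and it is reductive --- a standard property of quotients of reductive groups (over $\mathbb{C}$ this follows at once because linear reductivity passes to quotients; see also \cite{Borel}). The map $\frac{G}{H}\to\mathbb{C}^*$ defining the fibration is a morphism of algebraic groups $\bar{\mathcal{X}}\colon\bar G\to\mathbb{C}^*$ sending the identity to the identity, hence a character by Lemma \ref{lem5}; it is non-trivial, and its kernel is exactly the fibre $\frac{S}{H}$, which is connected by hypothesis.

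Thus $(\bar G,\bar{\mathcal{X}})$ satisfies all the hypotheses of Proposition \ref{prop1} --- a connected reductive group together with a non-trivial character having connected kernel --- and that proposition gives immediately that
\[
\frac{S}{H}\longrightarrow\frac{G}{H}\xrightarrow{\bar{\mathcal{X}}}\mathbb{C}^*
\]
is cohomologically trivial, which is the assertion of the corollary.

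The only non-formal ingredient here is the reductivity of $\bar G=G/H$, and this is exactly where the normality of $H$ is used; the rest is unwinding definitions, so I foresee no real obstacle. One can even sidestep quoting reductivity of $G/H$: the proof of Proposition \ref{prop1} uses only (i) a central copy of $\mathbb{C}^*$ inside the group that is carried onto $\mathbb{C}^*$ by the character, and (ii) path-connectedness of the kernel. The image in $\bar G$ of the central $\mathbb{C}^*\subseteq G$ produced in Proposition \ref{prop1} is again a central $\mathbb{C}^*$ --- its intersection with $H$ is a proper, hence finite, subgroup of $\mathbb{C}^*$ --- and it still maps onto the base under $\bar{\mathcal{X}}$, while (ii) is precisely the hypothesis on $\frac{S}{H}$; from this point the monodromy computation (translation by a root of unity inside a path-connected fibre, hence null-homotopic) is inherited verbatim from Proposition \ref{prop1}.
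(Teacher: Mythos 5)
Your proposal is correct and is essentially the paper's own argument: normality of $H$ makes $G/H$ a (connected, reductive) group with $S/H$ the connected kernel of the induced character, so Proposition \ref{prop1} applies directly. The extra details you supply (reductivity of the quotient, or alternatively pushing the central $\mathbb{C}^*$ forward to $G/H$) only flesh out what the paper leaves implicit.
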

\begin{proof}\renewcommand{\qedsymbol}{$\blacksquare$}
Since $H$ is normal $\frac{G}{H}$ and $\frac{S}{H}$ are groups and by the propsition \ref{prop1} we are done.
\end{proof}

We note that we are not considering all homogeneous spaces of a reductive group $G$,  for example we will not consider $H$ to be a Borel or a parabolic subgroup of $G$, for the quotient $\frac{G}{H}$ would then be projective and thus has no non- trivial morphisms to any affine variety.\par
Let $\mathcal{X}:\frac{G}{H}\rightarrow\mathbb{C}^*$ be a nowhere vanishing function on a homogeneous space of an algebraic group $G$, with $H$ mapping to 1. Then we have a natural map $\mathcal{X}': G\rightarrow\mathbb{C}^*$ by composing with the quotient map. We will refer the map $\mathcal{X}$ on $\frac{G}{H}$ as a character. We will call the the fibration $S\rightarrow G\xrightarrow{\mathcal{X}} \mathbb{C}^*$ to be the associated fibration of algebraic groups, where $S$ is the kernel of $\mathcal{X}$. We will first show that if the kernel of the associated fibration is connected then the associated fibration as well as the fibration of homogeneous spaces of affine algebraic groups is cohomologically trivial.\par

We shall now prove cohomological triviality of fibrations of homogeneous spaces when the kernel $S$ is connected.
\begin{theorem}\label{thm1}
Consider  a fibration of homogeneous spaces of  a connected reductive group $G$: 
\begin{center}
$\frac{S}{H}\longrightarrow\frac{G}{H}\longrightarrow\mathbb{C}^*$
\end{center}
Suppose that the kernel $S$ of the associated character is connected, then the fibration is cohomologically trivial.
\end{theorem}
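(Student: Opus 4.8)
The plan is to reduce the fibration of homogeneous spaces to the already-established Proposition \ref{prop1} by exploiting the action of the central $\mathbb{C}^*$ constructed in its proof. Concretely, since $G$ is reductive and $\mathcal{X}$ is a non-trivial character, the argument of Proposition \ref{prop1} produces a central subgroup $G_m \cong \mathbb{C}^*$ in $Z(G)^0$ on which $\mathcal{X}$ restricts to the power map $z \mapsto z^d$ for some $d \geq 1$, together with the fibre product $\widetilde{G} \subset G \times \mathbb{C}^*$ which is isomorphic as an algebraic group to $S \times \mathbb{C}^*$. I would first form the corresponding pullback of the homogeneous-space fibration: set $\widetilde{E} := \widetilde{G}/H$ (viewing $H \subset S \subset \widetilde{G}$ via the first coordinate), so that we obtain a commutative diagram of fibre bundles
\begin{center}
\begin{tikzcd}
\frac{S}{H} \arrow{r} \arrow[equal]{d} & \widetilde{E} \arrow{r} \arrow{d}{\pi_1} & \mathbb{C}^* \arrow{d}{z \mapsto z^d} \\
\frac{S}{H} \arrow{r} & \frac{G}{H} \arrow{r}{\mathcal{X}} & \mathbb{C}^*
\end{tikzcd}
\end{center}
where the right square is Cartesian and the left vertical map is the identity on fibres.

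Next I would analyze the top fibration $\frac{S}{H} \to \widetilde{E} \to \mathbb{C}^*$ directly. Using the isomorphism $\widetilde{G} \cong S \times \mathbb{C}^*$ (with $G_m$ the second factor, acting centrally), and the fact that $H \subset S$, one gets $\widetilde{E} = \widetilde{G}/H \cong (S/H) \times \mathbb{C}^*$, i.e. the top fibration is a \emph{trivial} bundle, hence certainly cohomologically trivial, with trivial monodromy. Then, exactly as in Proposition \ref{prop1}, I would transfer triviality of the monodromy downstairs: a loop in the base $\mathbb{C}^*$ of the bottom fibration lifts to a path in $\mathbb{C}^*$ from $1$ to a $d$-th root of unity $\zeta$; the monodromy of $\frac{G}{H} \to \mathbb{C}^*$ along this loop is realized, via the central action of $G_m$, as translation on $S/H$ by the element $\zeta \in G_m$. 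Since $G_m$ is connected and acts on $S/H$ through algebraic automorphisms, the map "translation by $\zeta$'' is homotopic (through the path in $G_m$ from $1$ to $\zeta$ acting on $S/H$) to the identity on $S/H$; therefore $\sigma_1^*$ on $H^*(S/H;\mathbb{Q})$ is the identity. By Lemma \ref{le} the bottom fibration is cohomologically trivial, which is the claim.

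Two points need care, and the main obstacle lies in the first. First, I must verify that $\widetilde{G}/H$ really is $(S/H)\times\mathbb{C}^*$, i.e. that passing to the $H$-quotient commutes with the product decomposition — this is where connectedness of $S$ (and hence connectedness of $S/H$, given the standing assumption that fibres are connected) is genuinely used, both to know $\widetilde{G}$ is connected and to ensure $\widetilde{E}$ has connected fibre so that the covering-space/monodromy formalism of Section \ref{Prelims} applies. Second, I must check that the central $G_m$-action on $\frac{G}{H}$ descends to an honest action on $S/H$ realizing the monodromy: since $G_m \subset Z(G)^0$ commutes with everything, left translation by $t \in G_m$ on $G/H$ preserves each fibre $\mathcal{X}^{-1}(c)$ up to the shift $c \mapsto \mathcal{X}(t)\,c = t^d c$, and restricting to $\mathcal{X}^{-1}(1) = S/H$ it permutes the fibres correctly; identifying this with the monodromy homeomorphism is the same bookkeeping as in Proposition \ref{prop1}, now carried out $H$-equivariantly. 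Granting these, the contractibility of the translation path gives triviality of $\sigma_1^*$ and completes the proof; note in particular that the monodromy group is a quotient of the finite group $\Gamma \cong \mathbb{Z}/d\mathbb{Z}$, hence finite, as remarked in the introduction.
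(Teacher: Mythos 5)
Your proposal is correct and follows essentially the same route as the paper: pull back along the degree-$d$ cover $z\mapsto z^d$ using the central $\mathbb{C}^*$ from Proposition \ref{prop1}, identify $\widetilde{G}/H$ with $(S/H)\times\mathbb{C}^*$ (the centrality of $G_m$ making the map well defined on $H$-cosets), and kill the monodromy by observing it is translation by a root of unity $\zeta\in S$. The only phrasing to adjust is that the null-homotopy of translation-by-$\zeta$ as a self-map of $S/H$ should run along a path from $\zeta$ to $1$ inside the connected group $S$ (translations by general $t\in G_m$ move the fibre $\mathcal{X}^{-1}(1)$ to $\mathcal{X}^{-1}(t^d)$), which is exactly what the paper does.
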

\begin{proof}\renewcommand{\qedsymbol}{$\blacksquare$}
 Consider the fibration
$1\longrightarrow\frac{S}{H}\longrightarrow\frac{G}{H}\longrightarrow\mathbb{C}^*\longrightarrow 1$, by composing with the natural projection map from $G$ we get a map
from $G$ to $\mathbb{C}^*$ 

\begin{center}
\begin{tikzcd}
\frac{G}{H} \arrow{r}{\mathcal{X}}  & \mathbb{C}^*\\
     G \arrow{u}{\pi}  \arrow[swap]{ur}{\mathcal{X}'}
\end{tikzcd}
\end{center}
which we will call $\mathcal{X}'$. This is a nowhere vanishing map 
from $G$ onto $\mathbb{C}^*$ and thus is a non trivial character of $G$ by lemma \ref{lem5} (note that the identity element $e$ of $G$ is mapped to 1, as the subgroup $H$ is mapped identically to 1). We will show that the lift of a loop in $\mathbb{C}^*$ is homotopic to identity and hence, the fibration is cohomologically trivial. 
Now consider the fibration of homogeneous space of a closed subgroup $H$ , which is not necessarily connected, contained in $S$.

 This gives rise to a commutative diagram as follows:
\begin{center}
\begin{tikzcd}
\frac{S}{H} \arrow{r}{}  \arrow [swap]{d}{}  & \frac{\tilde G}{H} \arrow{r}{}  \arrow [swap]{d}{\pi_1} \arrow[dr, phantom, "\square"] 
& \mathbb{C}^*\arrow{d}{\mathcal{X}} \\
\frac{S}{H} \arrow{r}{}  & \frac{G}{H}   \arrow [swap]{r}{\mathcal{X}} &\mathbb{C}^* 
\end{tikzcd}
\end{center}
As in proposition \ref{prop1}, $\frac{\tilde{G}}{H}$ is isomorphic to 
$\frac{S}{H}\times\mathbb{C}^*$ induced by the map from $S\times\mathbb{C}^*\longrightarrow \tilde{G}$ i.e. $(\bar{x},t)\mapsto(\bar{xt},t)$ to see that the map defined as above is well defined note that $\bar{x}=xh$ for some $h$ in $H$, thus $(\bar{x},t)=(xh,t)\mapsto(xht,t)=(xth,t)$ (as $\mathbb{C}^*$ is central)which is same as $(\bar{xt},t)$. Thus the upper fibration $\frac{S}{H}\to \frac{\tilde{G}}{H}\to \mathbb{C}^*$ is trivial.
To show that the lift of a loop is homotopic to identity, for the homogeneous spaces we follow a similar argument as above for the case of the group and noting that a loop in the lower $\mathbb{C}^*$ ends at $\zeta$ which is in $S$ and hence in $\frac{S}{H}$ which gives rise to a self map of $\frac{S}{H}$ given by translation by $\zeta$. But translations in an homogeneous space of a connected algebraic group are isotopic to identity (since $\zeta \in S$, there is a path from $\zeta$ to 1 in $S$ and this gives rise to automorphisms of $\frac{S}{H}$ which are homotopic to identity). And thus in the upper level the loop is homotopic to identity. Thus by the commutativity of the diagram the loop is homotopic to identity in the lower level. Thus by lemma \ref{le} the fibration $\frac{S}{H}\to\frac{G}{H}\to \mathbb{C}^*$ is cohomologically trivial.
\end{proof}
\begin{remark}
Note that, we have shown in proposition \ref{prop1} (and for homogeneous spaces in theorem \ref{thm1}) that for a reductive group $G$ and a non trivial, non constant map $\mathcal{X}$ to $\mathbb{C}^*$ with a connected kernel $S$ is a product $S\times \mathbb{C}^*$ (a product $\frac{S}{H}\times \mathbb{C}^*$ for homogeneous spaces) after a finite cover! This in particular shows that the monodromy is semisimple.
\end{remark}
We will now give a  criterion for cohomological triviality when the kernel $S$ of the associated character is not necessarily connected.
\begin{coro}\label{cor1}
Consider  a fibration of homogeneous spaces of  a connected reductive group $G$:
\begin{center}
$\frac{S}{H}\longrightarrow\frac{G}{H}\longrightarrow\mathbb{C}^*$
\end{center}
the fibration is cohomologically trivial if and only if $H^*\big( \frac{G}{H}\big)\approx H^*\big(\frac{G}{S^0\cap H}\big)$.
\end{coro}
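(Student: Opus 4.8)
The plan is to reduce the statement to Theorem~\ref{thm1} by passing to the finite covering $p\colon G/K\to G/H$ with $K:=S^0\cap H$; on $G/K$ the associated character will acquire the \emph{connected} kernel $S^0$, so Theorem~\ref{thm1} computes $H^*(G/K)$, and a transfer argument then compares this with $H^*(G/H)$. First I would fix the structure. Writing $\mathcal{X}'\colon G\to\mathbb{C}^*$ for the associated character and $S=\ker\mathcal{X}'$, the subgroup $S$ is normal in $G$, so its identity component $S^0$, being preserved by every automorphism of $S$, is also normal in $G$; hence $K=S^0\cap H$ is normal in $H$ and $p\colon G/K\to G/H$ is a regular covering with finite deck group $D:=H/K\cong HS^0/S^0$. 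The standing assumption that the fibre $S/H$ is connected means precisely that $S=S^0H$ (the identity component of $S/H$ is $S^0H/H$), so $D\cong S/S^0$ and the original fibre satisfies $S/H\cong S^0/K$.

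Assume $D$ is nontrivial (if $D$ is trivial then $G/K=G/H$ and the corollary is immediate from Theorem~\ref{thm1}). Then the nowhere vanishing map $\mathcal{X}\circ p\colon G/K\to\mathbb{C}^*$ has the disconnected fibre $S/K$ over $1$; since $K\subseteq S^0$, the identity component of $S/K$ is $S^0/K$ and $S/K$ has exactly $|D|$ components. Applying Lemma~\ref{lem1} to $\mathcal{X}\circ p$ produces the connected $|D|$-fold cover $\mathbb{C}^*\xrightarrow{z\mapsto z^{|D|}}\mathbb{C}^*$ together with a lift $\phi\colon G/K\to\mathbb{C}^*$, which I normalise so that $\phi(eK)=1$; then $\phi^{-1}(1)$ is the connected component of $S/K$ through $eK$, namely $S^0/K$. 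Being the lift of a morphism of varieties along the finite \'etale cover $z\mapsto z^{|D|}$, the map $\phi$ is itself a morphism, so $\phi\circ\pi_K\colon G\to\mathbb{C}^*$ (with $\pi_K\colon G\to G/K$ the quotient) is a non-constant morphism fixing the identity, hence a character by Lemma~\ref{lem5}; its kernel is $\pi_K^{-1}(S^0/K)=S^0$, which is connected. Thus $\frac{S^0}{K}\to\frac{G}{K}\xrightarrow{\phi}\mathbb{C}^*$ is a fibration of homogeneous spaces of the connected reductive group $G$ whose associated character has connected kernel, so by Theorem~\ref{thm1} it is cohomologically trivial; using $S^0/K\cong S/H$ this says, for all $k$,
\[
H^k(G/K)\;\cong\;\bigoplus_{p+q=k}H^p(S/H)\otimes H^q(\mathbb{C}^*).
\]

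It remains to compare the two sides. By Lemma~\ref{le}, whose converse is supplied by the Wang exact sequence of a fibration over $\mathbb{C}^*\simeq S^1$, the fibration $\frac{S}{H}\to\frac{G}{H}\xrightarrow{\mathcal{X}}\mathbb{C}^*$ is cohomologically trivial if and only if $b_k(G/H)=\sum_{p+q=k}b_p(S/H)b_q(\mathbb{C}^*)$ for all $k$, equivalently, by the displayed isomorphism, if and only if $b_k(G/H)=b_k(G/K)$ for all $k$. On the other hand, $p$ is a finite covering, so its transfer $\tau$ satisfies $\tau\circ p^*=|D|\cdot\mathrm{id}$ on rational cohomology, and hence $p^*\colon H^*(G/H)\to H^*(G/K)$ is injective in each degree; as both sides are finite-dimensional, $b_k(G/H)=b_k(G/K)$ for all $k$ exactly when $p^*$ is an isomorphism, i.e.\ exactly when $H^*(G/H)\cong H^*\big(G/(S^0\cap H)\big)$. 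Chaining these equivalences proves the corollary.

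The step I expect to be the main obstacle is the exact identification $\phi^{-1}(1)=S^0/K$, as opposed to some larger union of components of $S/K$: this is where both $K\subseteq S^0$ and the standing hypothesis $S=S^0H$ are essential, and it is what makes the dimension bookkeeping between $G/H$ and $G/K$ close up. A subsidiary point to check carefully is that $\phi$ is genuinely a morphism of varieties, so that Lemma~\ref{lem5} is applicable; this holds because a continuous lift of a morphism of smooth varieties along a finite \'etale cover is again a morphism.
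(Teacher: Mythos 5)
Your argument is correct and follows essentially the same route as the paper: both pass to the intermediate space $\frac{G}{S^0\cap H}$, observe that the induced (lifted) character there has connected kernel $S^0$ so that Theorem \ref{thm1} applies and computes $H^*\big(\frac{G}{S^0\cap H}\big)\cong H^*\big(\frac{S}{H}\big)\otimes H^*(\mathbb{C}^*)$, and then reduce cohomological triviality to the comparison of $H^*\big(\frac{G}{H}\big)$ with $H^*\big(\frac{G}{S^0\cap H}\big)$. Your transfer argument, which shows the comparison isomorphism is realised by the pullback along the finite covering itself, is a small strengthening that the paper asserts (in the introduction, via the phrase ``the natural map induces an isomorphism'') but does not spell out.
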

\begin{proof}\renewcommand{\qedsymbol}{$\blacksquare$}
We have the following commutative diagram
\begin{center}
\begin{tikzcd}
\frac{S}{H}\arrow{r}{} & \frac{G}{H}\arrow{r}{\mathcal{X}} & \mathbb{C}^*\\
\frac{S^0}{S^0\cap H}\arrow{r}{} \arrow[swap]{u}{\approx} &\frac{G}{S^0\cap H} \arrow{r}{\tilde{\mathcal{X}}} \arrow[swap]{u}{} & \mathbb{C}^* \arrow{u}
\end{tikzcd}
\end{center}
As $\frac{S^0}{S^0\cap H}$ is connected  the lower fibration is cohomologically trivial by theorem \ref{thm1}. Thus $\frac{S}{H}\rightarrow\frac{G}{ H}\rightarrow\mathbb{C}^*$ is cohomologically trivial if and only if $H^* \big( \frac{G}{H} \big) \approx H^*\big( \frac{G}{S^0 \cap H} \big)$.
\end{proof}
\par
Going back to the example \ref{eg3}, by computation of the cohomologies of the homogeneous spaces $\frac{GL_n}{SO_n}$ and $\frac{GL_n}{O_n}$ shows that the above criterion in corollary \ref{cor1} is satisfied by the fibration 
$\frac{SL_n}{SO_n}\rightarrow\frac{GL_n}{O_n}\rightarrow\mathbb{C}^*$ when $n= 2m+1$ and the fibration does not satisfy the criterion when $n=2m$. \par
We can deduce the general case for an arbitrary, but connected linear algebraic group, from this as follows:

\begin{coro}
Let $\frac{G}{H}$ be a homogeneous space of an arbitrary connected affine algebraic group with a nowhere vanishing function $\mathcal{X}$ to $\mathbb{C}$. If the kernel of the associated character is connected then the fibration is cohomologically trivial.
\end{coro}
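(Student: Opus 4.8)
The plan is to reduce to the reductive case already settled in Theorem \ref{thm1}, by quotienting out the unipotent radical, just as Lemma \ref{lem3.1} does for groups but now at the level of homogeneous spaces. First, composing $\mathcal{X}$ with the quotient morphism $G\to G/H$ produces a nowhere vanishing non-constant morphism $\mathcal{X}'\colon G\to\mathbb{C}^*$ sending $e$ to $1$, hence a non-trivial character by Lemma \ref{lem5}; let $S=\ker\mathcal{X}'$, which is connected by hypothesis, and note $H\subseteq S$. Since a unipotent group has no non-constant morphism to $\mathbb{C}^*$, the unipotent radical $G_u$ lies in $S$, so $\mathcal{X}'$ factors through the connected reductive group $\bar G:=G/G_u$, giving a character $\bar{\mathcal{X}}\colon\bar G\to\mathbb{C}^*$ whose kernel $\bar S:=S/G_u$ is connected (being the image of the connected group $S$). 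Putting $\bar H:=HG_u/G_u\subseteq\bar S$, we obtain an induced nowhere vanishing map $\bar{\mathcal{X}}\colon\bar G/\bar H\to\mathbb{C}^*$, and by the third isomorphism theorem canonical identifications $\bar G/\bar H\cong G/(HG_u)$ and $\bar S/\bar H\cong S/(HG_u)$.

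Next I would compare $G/H$ with $\bar G/\bar H$. The natural maps $G/H\to G/(HG_u)$ and $S/H\to S/(HG_u)$ are fibre bundles by (the relative form of) Lemma \ref{lem2}, both with fibre $HG_u/H\cong G_u/(G_u\cap H)$. Over $\mathbb{C}$, a quotient of a connected unipotent group by a closed subgroup is isomorphic as a variety to an affine space, hence contractible; so these two bundle projections are homotopy equivalences, and together with $\mathcal{X}$ and $\bar{\mathcal{X}}$ they assemble into a strictly commutative ladder of fibrations over $\mathbb{C}^*$:
\begin{center}
\begin{tikzcd}
\frac{S}{H}\arrow{r}{}\arrow{d}{} & \frac{G}{H}\arrow{r}{\mathcal{X}}\arrow{d}{} & \mathbb{C}^*\arrow{d}{\mathrm{id}}\\
\frac{\bar S}{\bar H}\arrow{r}{} & \frac{\bar G}{\bar H}\arrow{r}{\bar{\mathcal{X}}} & \mathbb{C}^*
\end{tikzcd}
\end{center}
in which the left two vertical arrows are homotopy equivalences (note also that $S/H$, and likewise $\bar S/\bar H$, is connected since $S$ is, so the standing hypothesis on fibres is met).

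Finally, the bottom row is a fibration of homogeneous spaces of the connected reductive group $\bar G$ whose associated character $\bar{\mathcal{X}}$ has connected kernel $\bar S$, so it is cohomologically trivial by Theorem \ref{thm1}; in particular, by the argument there, the lift of the generating loop of $\pi_1(\mathbb{C}^*)$ acts as the identity on $H^*(\bar S/\bar H)$. Since monodromy is natural for maps of fibrations over a fixed base, the monodromy $\sigma_1^*$ of the top row is, via the homotopy equivalence $\frac{S}{H}\to\frac{\bar S}{\bar H}$, conjugate to that of the bottom row, hence is again the identity on $H^*\!\big(\frac{S}{H}\big)$; by Lemma \ref{le} the fibration $\frac{S}{H}\to\frac{G}{H}\to\mathbb{C}^*$ is cohomologically trivial. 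I do not expect a serious obstacle: the real input is Theorem \ref{thm1}, and what remains is the mildly delicate bookkeeping of the middle step — verifying that $G/H\to G/(HG_u)$ is genuinely a fibre bundle with contractible fibre (this is where characteristic zero and the structure theory of unipotent groups enter) and that the ladder above commutes on the nose over $\mathbb{C}^*$, so that Lemma \ref{le} may be applied after transporting the monodromy.
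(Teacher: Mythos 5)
Your proposal is correct and follows essentially the same route as the paper: quotient out the unipotent radical $G_u$, observe that the relevant homogeneous spaces change only by contractible unipotent fibres, and reduce to the reductive case settled by Theorem \ref{thm1}. You merely spell out more carefully the details the paper leaves implicit (that $G/H\to G/(HG_u)$ is a bundle with contractible fibre, the commuting ladder, and the transport of monodromy), which is sound.
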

\begin{proof}\renewcommand{\qedsymbol}{$\blacksquare$}

Since we are looking at the singular cohomology of the homogeneous spaces, G and $\frac{G}{G_u}$, where $G_u$ is the unipotent radical, are of the same homotopy type. This is because $G_u$
is isomorphic to an affine $n$-space  $\mathbb{A}^n$ (see \cite{KMT}, this is in fact true for any unipotent group defined over a perfect field) which is contractible. Thus it suffices to look at the reductive quotient $\frac{G}{G_u}$. 
Since a subgroup of a unipotent group is unipotent the fibration
\begin{center}
\begin{tikzcd}
\frac{S}{H}\arrow[r] & \frac{G}{H}\arrow[r] & \mathbb{C}^{\ast}
\end{tikzcd}
\end{center}
reduces to 
\begin{center}
\begin{tikzcd}
\frac{S/S\cap G_u}{H/H\cap G_u} \arrow[r] & \frac{G/G_u}{H/H\cap G_u} \arrow[r] & \mathbb{C}^\ast
\end{tikzcd}
\end{center}
\end{proof}

Note that in our analysis quotienting out the unipotent radical is essential. For example, not all characters of $G$ are central when $G$ is solvable, i.e. given a character $\mathcal{X}:G\to\mathbb{C}^*$ the central torus of $Z(G)^0$ may not surject onto $\mathbb{C}^*$. If $G$ is a connected solvable group then the unipotent radical $U$consists of all unipotent elements of $G$ and the quotient $\frac{G}{U}$ is a torus and $G$ is homotopic to $\frac{G}{U}$, which is homotopic to a maximal torus $T$.
Thus consider an exact sequence of a solvable group
 $ S \rightarrow G\xrightarrow{\mathcal{X}} \mathbb{C}^*$. Note that $S\subset G$ is also solvable and $G$ is homotopic to $T$ and similarly $S$ is homotopic to $T_1$, where $T$ and $T_1$ are maximal tori in $G$ and $S$ respectively. Then the above  fibration reduces to $ T_1 \rightarrow T \xrightarrow { \mathcal{X}' }  \mathbb{C}^*$, a fibration of tori where cohomological triviality is equivalent to checking the primitivity of $\mathcal{X}'$.\par
We now consider the case of fibration of homogeneous space of solvable groups 
$\frac{S}{H}\rightarrow \frac{G}{H}\xrightarrow{\mathcal{X}}\mathbb{C}^*$.
We first analyse the exact sequence $H\rightarrow G\rightarrow \frac{G}{H}$.
As before, $H\approx T_2$ and $G\approx T$ where $T_2^0$ and $T$ are maximal tori of $H$ and $G$ respectively. Thus the exact sequence is equivalent to $T_2\rightarrow T\rightarrow \frac{T}{T_2}$. For the same reason the sequence $H\rightarrow S \rightarrow\frac{S}{H}$ is equivalent to $T_2\rightarrow T_1 \rightarrow \frac{T_1}{T_2}$ where $T_1^0$ is a maximal torus in $S$.
Thus the fibration $\frac{S}{H}\rightarrow \frac{G}{H}\xrightarrow{\mathcal{X}}\mathbb{C}^*$ is equivalent to the fibration 
\begin{equation}\label{eq1}
\frac{T_1}{T_2}\rightarrow\frac{T}{T_2}\rightarrow\mathbb{C}^*
\end{equation}

Since we have that $\frac{S}{H}$ is connected the fibre $\frac{T_1}{T_2}$ is also connected and is homeomorphic to $\frac{T_1^0}{T_2^0}$ which is a torus. Thus the fibration (\ref{eq1}) is equivalent to a fibration of tori
\begin{equation*}\label{eq2}
\frac{T_1^0}{T_1^0\cap T_2}\rightarrow\frac{T^0}{T_2^0}\rightarrow\mathbb{C}^*
\end{equation*}
which is cohomologically trivial.

\section{Converse}\label{con}
Let us consider an example.

Let $G$ be a reductive group and $T\subset N(T)$, be a maximal torus of $G$ in the normalizer of $T$. It is a well known result that $H^{\ast}\big(\frac{G}{T}\big)=\mathbb{Q}[W]$, where $W=\frac{N(T)}{T}$ is the Weyl group of $G$ (the Weyl group is a finite group and the cohomology ring of $\frac{G}{T}$ is a regular representation of the Weyl group). Further $H^{\ast}\big(\frac{G}{N(T)}\big)=\mathbb{Q}$. 
Moreover for any subgroup $H$ such that, $T\subset H\subset N(T)$, the cohomology of $\frac{G}{H}$ can be computed by the covering $\frac{H}{T}\rightarrow\frac{G}{T}\rightarrow\frac{G}{H}$. $H^*\big( \frac{G}{H}\big)= H^*\big( \frac{G}{T}\big)^{\frac{H}{T}}$, as graded algebras since $\frac{H}{T}$ is finite. $H^*\big( \frac{G}{T}\big)^{\frac{H}{T}}=\mathbb{Q}[\frac{W}{W_H}]$ where $W_H$ is the image of $H$ in $W$. Motivated by this example we will now construct a class of fibrations that are not cohomologically trivial.

\begin{theorem}
Let $S$ be a reductive group and suppose that $H'\subset H$ be subgroups of $S$ such that $\frac{H}{H'}$ is a finite cyclic group of order $d$ and $H^\ast\big(\frac{S}{H}\big)\xhookrightarrow{\not\approx} H^\ast\big(\frac{S}{H'}\big)$ i.e. $b_j\big(\frac{S}{H}\big)\neq b_j\big(\frac{S}{H'}\big)$ for some $j$.
Then there is an embedding of $H$ in 
 group $S\times\mathbb{C}^*=G$, such that the fibration $\frac{S}{H'}\rightarrow\frac{G}{H}\rightarrow\mathbb{C}^*$ is not cohomologically trivial.
\end{theorem}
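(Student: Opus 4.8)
The strategy is to build the embedding so that the quotient $G/H$ becomes a ``mapping torus'' of the finite cyclic covering $S/H'\to S/H$, so that the monodromy of the resulting fibration over $\mathbb{C}^*$ is the deck translation of $S/H'$ by a generator of $H/H'$; the hypothesis $b_j(S/H)\neq b_j(S/H')$ is then precisely the statement that this translation acts nontrivially on $H^*(S/H';\mathbb{Q})$, and Corollary~\ref{cor1} converts this into failure of cohomological triviality. Concretely, fix $h_0\in H$ whose class generates $H/H'\cong\mathbb{Z}/d$, let $\mu_d\subset\mathbb{C}^*$ be the $d$-th roots of unity, pick an isomorphism $H/H'\xrightarrow{\sim}\mu_d$ and write $\omega\colon H\twoheadrightarrow H/H'\xrightarrow{\sim}\mu_d\subset\mathbb{C}^*$ for the composite. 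Define $\phi\colon H\to S\times\mathbb{C}^*=G$ by $\phi(h)=(h,\omega(h))$; since its first coordinate is the inclusion $H\hookrightarrow S$, the map $\phi$ is an injective homomorphism, giving the required embedding of $H$ in $G$.

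Next I would take $\mathcal{X}\colon G\to\mathbb{C}^*$ to be the character $(s,t)\mapsto t^{d}$. Since $\mathcal{X}(\phi(h))=\omega(h)^d=1$, $\mathcal{X}$ descends to $\mathcal{X}\colon G/\phi(H)\to\mathbb{C}^*$, and this is the fibration we study. Its kernel is $\mathcal{S}:=S\times\mu_d$, with $\mathcal{S}^0=S\times\{1\}$ and $\mathcal{S}^0\cap\phi(H)=H'\times\{1\}$; because $\omega$ surjects onto $\mu_d$ we have $\mathcal{S}=\mathcal{S}^0\cdot\phi(H)$, so the fibre of $\mathcal{X}$, namely $\mathcal{S}/\phi(H)$, is identified via $S\times\{1\}\hookrightarrow\mathcal{S}$ with $\frac{S}{H'}$ (which is connected, $S$ being connected). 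Thus we have exactly the fibration $\frac{S}{H'}\to\frac{G}{H}\to\mathbb{C}^*$ of the statement, and the hypotheses of Corollary~\ref{cor1} apply to it.

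To use that criterion I would compute $H^*(G/\phi(H);\mathbb{Q})$. The natural map $G/(H'\times\{1\})\to G/\phi(H)$ is a regular covering with deck group $\phi(H)/(H'\times\{1\})\cong H/H'\cong\mathbb{Z}/d$, and $G/(H'\times\{1\})\cong \frac{S}{H'}\times\mathbb{C}^*$. Under this identification the generator $\phi(h_0)$ of the deck group acts by $(\bar{s},t)\mapsto(R_{h_0}(\bar{s}),\,\zeta t)$, where $\zeta=\omega(h_0)$ is a primitive $d$-th root of unity and $R_{h_0}\colon \frac{S}{H'}\to\frac{S}{H'}$, $sH'\mapsto sh_0H'$, is right translation (well defined since $H'\triangleleft H$). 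As $t\mapsto\zeta t$ is homotopic to the identity on $\mathbb{C}^*$, taking $\mathbb{Z}/d$-invariants in rational cohomology gives $H^*(G/\phi(H))\cong H^*(S/H')^{\langle R_{h_0}^*\rangle}\otimes H^*(\mathbb{C}^*)$. But $S/H'\to S/H$ is itself a regular $\mathbb{Z}/d$-covering with deck group generated by $R_{h_0}$ (indeed $h_0^k\in H'\iff d\mid k$), so the transfer isomorphism for finite coverings over $\mathbb{Q}$ gives $H^*(S/H')^{\langle R_{h_0}^*\rangle}\cong H^*(S/H)$, whence $H^*(G/\phi(H))\cong H^*(S/H)\otimes H^*(\mathbb{C}^*)$.

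Finally, Corollary~\ref{cor1} says the fibration $\frac{S}{H'}\to\frac{G}{H}\to\mathbb{C}^*$ is cohomologically trivial if and only if $H^*(G/\phi(H))\cong H^*\big(G/(\mathcal{S}^0\cap\phi(H))\big)=H^*\big(\frac{S}{H'}\times\mathbb{C}^*\big)=H^*(S/H')\otimes H^*(\mathbb{C}^*)$. Comparing with the previous paragraph, triviality would force $b_j(S/H)=b_j(S/H')$ for every $j$, contradicting the hypothesis; hence the fibration is not cohomologically trivial (equivalently, its monodromy $\sigma_1^*$ is $(R_{h_0}^*)^{\pm1}\neq\mathrm{id}$, which one could also feed into the Wang sequence). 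The one genuinely delicate point is the third paragraph: one must check with care that $G/\phi(H)$ is precisely the quotient of $\frac{S}{H'}\times\mathbb{C}^*$ by the stated free $\mathbb{Z}/d$-action, so that the deck translation really is $R_{h_0}$ on the $\frac{S}{H'}$-factor with no extraneous root-of-unity twist; granting that, the remainder is the finite-covering transfer together with the K\"unneth formula for the (homotopically trivial) $\mathbb{C}^*$-factor.
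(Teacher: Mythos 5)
Your proposal is correct and follows essentially the same route as the paper: the same embedding $h\mapsto(h,\omega(h))$, the same identification of $\frac{G}{H}$ as the quotient of $\frac{S}{H'}\times\mathbb{C}^*$ by the free diagonal $\mathbb{Z}/d$-action, and the same invariant-cohomology (transfer) computation showing $H^*\big(\frac{G}{H}\big)\cong H^*\big(\frac{S}{H}\big)\otimes H^*(\mathbb{C}^*)$. The only cosmetic difference is that you conclude via Corollary~\ref{cor1} while the paper compares directly with the K\"unneth prediction $H^*\big(\frac{S}{H'}\big)\otimes H^*(\mathbb{C}^*)$; these amount to the same Betti-number comparison.
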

\begin{proof}\renewcommand{\qedsymbol}{$\blacksquare$}
Embed $H$ into $G= S\times \mathbb{C}^*$ as follows $H\xrightarrow{(id,\eta)} G$ defined as $h\mapsto (h, hH')$, where $\eta: \frac{H}{H'}\to\mathbb{C}^*$ is an embedding. The image of $H$  has the same number of connected components as $H$, so we will denote the image of $H$ in $G$ as $H$. 
We observe that $\frac{\mathbb{Z}}{<d>}\times \frac{\mathbb{Z}}{<d>}$ acts on $\frac{S}{H'}\times \mathbb{C}^*$ with the quotient $\frac{S}{H}\times\mathbb{C}^*$. 
The diagonal $\frac{\mathbb{Z}}{<d>}=:\Gamma$ in $\frac{\mathbb{Z}}{<d>}\times \frac{\mathbb{Z}}{<d>}$ acts on $\frac{S}{H'}\times\mathbb{C}^*= \frac{S\times\mathbb{C}^*}{H'}$, the action is defined as $\forall g\in \frac{H}{H'}$, $g(sh,t) = (shg, t^{\eta(g)})$ $=(sgh',t^{\eta(g)})$ for some $h,h'\in H', s\in S$, this is the same as the quotienting by the subgroup $H$ embedded in $G$ and hence the quotient is $\frac{G}{\bar{H}}$. Now consider the commutative diagram:
\begin{center}
\begin{tikzcd}
 & {G}\arrow{r}{f\circ\pi}\arrow[swap]{d}{} & \mathbb{C}^*\\
 \frac{S}{H'}\arrow{r}{} & \frac{G}{H}\arrow{ur}{} & 
\end{tikzcd}
\end{center}
where $\pi$ is the second projection and $f:\mathbb{C}^*\to\mathbb{C}^*$ is multiplication by $d$.
We note that the fibre can be identified with $\frac{S}{H'}$, for $H\cap S\times 1 = H'$.\par
We claim that the fibration 
\begin{tikzcd}
 \frac{S}{H'}\arrow{r}{} & \frac{G}{H}\arrow{r}{} & \mathbb{C}^* 
\end{tikzcd}
is not cohomologically trivial. Note that the cohomology of $\frac{G}{H}$ is the cohomology of $\frac{S}{H'}\times \mathbb{C}^*$ invariant under the action of $\Gamma$ i.e. $H^n(\frac{G}{H}) = \big(H^n(\frac{S}{H'}\times\mathbb{C}^*)\big)^\Gamma$. By the K\"unneth formula we have $\big(H^n(\frac{S}{H'}\times\mathbb{C}^*)\big)^\Gamma = \big(\big(H^n( \frac{S}{H'}) \otimes H^0( \mathbb{C}^* )\big)\oplus \big(H^{n-1}(\frac{S}{H'}\otimes H^1( \mathbb{C}^*) \big)^\Gamma= \big(H^n( \frac{S}{H'}) \otimes H^0( \mathbb{C}^* )\big)^\Gamma \oplus \big(H^{n-1}(\frac{S}{H'}\otimes H^1( \mathbb{C}^*) \big)^\Gamma$. Now the action of $\Gamma$ on each graded piece is as follows, $\sigma(a\otimes b)=\sigma(a)\otimes\sigma(b)$, where $\sigma\in  \Gamma, a\in H^i(\frac{S}{H'}), b\in H^j(\mathbb{C}^*) $, further $\sigma(b)=b$ as we are considering cohomology with coefficients in $\mathbb{Q}$ and thus multiplication by $d$ is an isomorphism for $H^1(\mathbb{C}^*)$. Thus if $a\otimes b$ is an invariant cocycle then $\sigma(a)\otimes\sigma(b)=a\otimes b$ i.e if and only if $\sigma(a)=a$. However $H^i(\frac{S}{H'})^\Gamma\approx H^i\big(\frac{\frac{S}{H'}}{\Gamma}\big)\approx H^i(\frac{S}{H})$. Thus by assumption the invariant cohomology is not the entire cohomology and hence the fibration 
\begin{tikzcd}
 \frac{S}{H'}\arrow{r}{} & \frac{G}{H}\arrow{r}{} & \mathbb{C}^* 
\end{tikzcd} 
is not cohomologically trivial.
\end{proof}

Going back to the discussion in the beginning of the section, we had shown that for any subgroup $H$ of $S$ such that $T\subset H\subset N(T)$ and $\frac{H}{T}$ is a cyclic group, then there is a change in the cohomology of $\frac{S}{H}$ and $\frac{S}{T}$ and thus by the theorem we construct $G$ so that $\frac{G}{H}\rightarrow\mathbb{C}^\ast$ is not a cohomologically trivial fibration.

\section{On arbitrary algebraic groups}\label{arb}
We prove cohomological triviality for character maps of homogeneous spaces of arbitrary connected algebraic groups, not necessarily of affine. We will use the notations and conventions of \cite{Brion}, however we continue to work over the field of complex numbers.

To apply our methods we have to find a cental $\mathbb{C}^*$ which surjects onto $\mathbb{C}^*$. This will be possible after we quotient out the unipotent radical, for in a solvable group there are characters which are not central. We will first prove the existence of the unipotent radical in an algebraic group $G$ which is not necessarily affine (For the existence of the unipotent radical we also refer to paragraph 8.41 of \cite{Milne}, for commutative algebraic groups see \cite{Brion1}, theorem 2.9). After quotienting out the unipotent radical we will show there is a central torus which maps surjectively  onto $\mathbb{C}^*$ under a non- trivial character. 

\begin{lem}
Let $G$ be an arbitrary algebraic group. Then there is maximal unipotent normal affine subgroup $U$ of $G$ called as the unipotent radical of $G$.
\end{lem}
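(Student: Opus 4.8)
The plan is to build $U$ as the subgroup generated by all normal unipotent affine subgroups of $G$, and then verify that this subgroup is itself normal, unipotent, and affine, so that it is the unique maximal such object. First I would recall from Chevalley's structure theorem (in the form stated in \cite{Brion}) that for the connected algebraic group $G$ there is a smallest normal affine subgroup $G_{\mathrm{aff}}$ with $G/G_{\mathrm{aff}}$ an abelian variety; any unipotent subgroup, being affine and connected, is contained in $G_{\mathrm{aff}}$, since its image in the abelian variety $G/G_{\mathrm{aff}}$ is both affine and proper, hence trivial. This reduces the whole question to the affine group $G_{\mathrm{aff}}$, where the unipotent radical $R_u(G_{\mathrm{aff}})$ exists by the classical theory of affine algebraic groups. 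I would then set $U := R_u(G_{\mathrm{aff}})$ and check it is normal in all of $G$: since $G_{\mathrm{aff}}$ is characteristic in $G$ (being the smallest normal affine subgroup) and $R_u$ is characteristic in $G_{\mathrm{aff}}$ (it is stable under all automorphisms, being defined intrinsically as the maximal connected normal unipotent subgroup), conjugation by any $g\in G$ preserves $U$.

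Next I would establish maximality and uniqueness. Let $V\subseteq G$ be any normal unipotent affine subgroup. By the argument above $V\subseteq G_{\mathrm{aff}}$, and $V$ is normal in $G_{\mathrm{aff}}$ because it is normal in $G$; a normal unipotent subgroup of an affine algebraic group lies in the unipotent radical, so $V\subseteq R_u(G_{\mathrm{aff}}) = U$. Conversely $U$ itself is normal (shown above), unipotent, and affine, so it is the unique maximal element of the family of normal unipotent affine subgroups of $G$. This gives exactly the statement, and justifies the name ``unipotent radical of $G$.''

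The main obstacle, and the step deserving the most care, is the claim that $G_{\mathrm{aff}}$ is preserved by all automorphisms of $G$ (in particular by inner ones coming from $G$ itself when $G$ is not assumed connected) and that it genuinely is the \emph{smallest} normal affine subgroup with abelian-variety quotient, rather than merely \emph{a} normal affine subgroup — this is where one must invoke the anti-affine part or the precise form of Chevalley's theorem valid over $\mathbb{C}$. If $G$ is only assumed to be an algebraic group (not connected), one first applies the structure theory to the identity component $G^0$, obtains $U^0 := R_u((G^0)_{\mathrm{aff}})$, and then checks that $U^0$ is normalized by all of $G$ because $(G^0)_{\mathrm{aff}}$ is characteristic in $G^0$ which is normal in $G$; here one should confirm $U^0$ is already connected so that no further enlargement is needed, which is automatic since unipotent groups in characteristic $0$ are connected. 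Everything else — that a subgroup of a unipotent group is unipotent, that unipotent groups are affine, that the image of an affine group in an abelian variety is trivial — is routine and can be cited from \cite{Brion} or \cite{Milne}.
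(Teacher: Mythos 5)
Your proposal is correct and follows essentially the same route as the paper: reduce to the affine part $G_{\mathrm{aff}}$ (the paper uses the kernel of the Albanese morphism, which is the same subgroup as in Chevalley's theorem), take $U = R_u(G_{\mathrm{aff}})$, and deduce normality in $G$ from the fact that the unipotent radical is characteristic in the normal subgroup $G_{\mathrm{aff}}$ — the paper just spells out this last step as an explicit conjugation computation. The supporting fact that unipotent subgroups land trivially in the abelian variety quotient is also the one the paper cites.
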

\begin{proof}\renewcommand{\qedsymbol}{$\blacksquare$}
Consider the Albanese morphism, $\alpha: G\rightarrow Alb(G)$, where $Alb(G)$ is an abelian variety called as the Albanese variety. Then the kernel of $\alpha$ is a maximal normal  affine algebraic group $G_{aff}$. $G_{aff}$ being affine has a unipotent radical say $G_u$. We claim that $G_u$ is normal in $G$ and is maximal normal unipotent and thus the unipotent radical of G. \par
Since $G_{aff}$ is normal in $G$ we have that conjugate of $G_u$ by any element of $G$ is still in $G_{aff}$ i.e. $g G_u g^{-1}\subset G_{aff}$ $\forall g\in G$. Further more $G_u$ is the unipotent radical of $G_{aff}$ thus $gG_u  g^{-1}$ is connected and unipotent subgroup of $G_{aff}$. Since $G_{aff}$ is normal in $G$ $g_1 g= gg_2$ where $g_1, g_2\in G_{aff}$, therefore $g_1gG_u g^{-1}g_1^{-1}= gg_2G_ug_2^{-1}g^{-1}=gG_ug^{-1}$. Thus $gG_ug^{-1}$ is normal in $G_{aff}$  and hence $gG_u  g^{-1}\subset G_u$. And since the dimensions of $G_u$ and $gG_u  g^{-1}$ are the same we have that $gG_u  g^{-1}= G_u$. Thus $G_u$ is normal in $G$.\par
To see that $Alb(G)$ has no unipotent subgroup we refer to lemma 2.3 in B. Conrad, \textit{Units on Product Varieties}, which says that there are no non constant maps of k- varieties from a linear algebraic group to an abelian variety.
 \end{proof}
 
We thus have the following definition
\begin{defn} \hfill
\begin{enumerate}[label=(\alph*)]
\item An algebraic group $G$ is $\mathit{quasi- reductive}$ if $G_{aff}$ if reductive.
\item An algebraic group $G$ is called $\mathit{anti}$-$\mathit{affine}$ if $\mathcal{O}  \left( G \right)= k$.
 \end{enumerate}
\end{defn}
This definition in particular says that an anti- affine group has no non- trivial characters, or more generally no non constant map to an affine variety.
We further note that any finite connected cover of an anti- affine group is anti- affine.
\begin{defn}
A $\mathit{semi}$-$\mathit{abelian}$ group is any group that can be obtained as an extension of an abelian variety $A$ by a torus $T$, i.e.
\begin{center}
\begin{tikzcd}
1\arrow{r} &T\arrow{r} & G\arrow{r}{q} &A\arrow{r} &1.
\end{tikzcd}
\end{center}
\end{defn}

\begin{theorem}
For a quasi- reductive group $G$ the central torus $Z(G_{aff})^0$ of $G_{aff}$ is the central torus in $G$.
\end{theorem}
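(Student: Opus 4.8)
The plan is to show that $Z(G_{\mathrm{aff}})^0$, which is a central torus \emph{in $G_{\mathrm{aff}}$}, is actually central in all of $G$, and that it exhausts the central torus of $G$. First I would recall the basic structure available here: $G_{\mathrm{aff}} = \ker(\alpha)$ for the Albanese morphism $\alpha\colon G \to \mathrm{Alb}(G)$ is a normal affine subgroup, and by hypothesis it is reductive, so $Z(G_{\mathrm{aff}})^0$ is a torus which is a central (in particular normal) subgroup of $G_{\mathrm{aff}}$. The first step is centrality in $G$: since $G_{\mathrm{aff}}$ is normal in $G$, conjugation by any $g\in G$ gives an automorphism of $G_{\mathrm{aff}}$, hence permutes its characteristic subgroups; in particular it preserves $Z(G_{\mathrm{aff}})^0$ (this is the unique maximal central subtorus, hence characteristic). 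So for each $g$ we get an automorphism of the torus $T := Z(G_{\mathrm{aff}})^0$. The key point is that this gives a morphism of algebraic groups $G \to \mathrm{Aut}(T) = \mathrm{GL}_n(\mathbb{Z})$ (an action by conjugation), and since $G$ is connected while $\mathrm{Aut}(T)$ is discrete, this morphism is trivial. Hence every $g\in G$ centralizes $T$, i.e. $T \subseteq Z(G)$, and being a connected subgroup it lies in $Z(G)^0$.

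For the reverse inclusion — that $T$ is \emph{the} central torus of $G$, i.e. the maximal subtorus of $Z(G)^0$ — I would argue that any subtorus $T'$ of $Z(G)^0$ is in particular an affine subgroup of $G$, hence contained in $G_{\mathrm{aff}}$ (which is the \emph{maximal} normal affine subgroup, and since $T'$ is central it is normal; alternatively one uses that $G_{\mathrm{aff}}$ contains every affine subgroup scheme that maps to $0$ in $\mathrm{Alb}(G)$, and a torus, being linear, cannot map nontrivially to an abelian variety by Conrad's Lemma 2.3 cited above, so $T' \subseteq G_{\mathrm{aff}}$). Being central in $G$, $T'$ is a fortiori central in $G_{\mathrm{aff}}$, so $T' \subseteq Z(G_{\mathrm{aff}})$, and being connected $T' \subseteq Z(G_{\mathrm{aff}})^0 = T$. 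Combining the two inclusions, $T$ is precisely the maximal central subtorus of $G$, as claimed.

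I expect the main obstacle to be the bookkeeping around ``the central torus of $G$'' when $G$ is not affine: one must be careful that $Z(G)^0$ need not be affine (it can contain an abelian-variety part), so ``the central torus'' must be read as ``the maximal central \emph{subtorus}'', and one must justify cleanly that any such subtorus is swallowed by $G_{\mathrm{aff}}$. The tool for that is exactly the statement that a linear algebraic group admits no nonconstant morphism to an abelian variety, so a subtorus of $G$ maps to a point under $\alpha$ and hence lies in $\ker(\alpha) = G_{\mathrm{aff}}$. Once that is in hand, both inclusions are routine, and the connectedness-versus-discreteness argument handles centrality; no delicate calculation is needed.
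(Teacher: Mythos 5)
Your proof is correct, and the second inclusion is the same as the paper's, but your argument for the centrality of $Z(G_{\mathrm{aff}})^0$ in $G$ takes a genuinely different route. The paper invokes Brion's structure theory: $G$ is generated by $G_{\mathrm{aff}}$ and the anti-affine part $G_{\mathrm{ant}}$, and $G_{\mathrm{ant}}$ is central in $G$ (Brion, Prop.\ 3.3.5 and Thm.\ 5.1.1), so anything commuting with $G_{\mathrm{aff}}$ automatically commutes with all of $G$; this shows that the whole of $Z(G_{\mathrm{aff}})$, not merely its maximal torus, is central. You instead use rigidity: $T=Z(G_{\mathrm{aff}})^0$ is characteristic in the normal subgroup $G_{\mathrm{aff}}$, hence normal in $G$, and a connected group acting on a torus by automorphisms acts trivially. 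This is correct and more self-contained, since it avoids the $G_{\mathrm{aff}}\cdot G_{\mathrm{ant}}$ decomposition entirely; the one phrase to tighten is ``morphism of algebraic groups $G\to\mathrm{Aut}(T)=\mathrm{GL}_n(\mathbb{Z})$,'' since $\mathrm{GL}_n(\mathbb{Z})$ is not an algebraic group. The clean version is the torsion-point argument: the connected group $G$ permutes the finite set $T[n]$ of $n$-torsion points, hence fixes it pointwise, and $\bigcup_n T[n]$ is Zariski dense in $T$; equivalently, cite rigidity of diagonalizable groups ($N_G(T)^0=Z_G(T)^0$), which does carry over to a non-affine ambient $G$. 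Your reverse inclusion --- any central subtorus of $G$ is a connected affine subgroup, hence lies in $G_{\mathrm{aff}}=\ker(\alpha)$ because a torus admits no nonconstant morphism to an abelian variety, and is then contained in $Z(G_{\mathrm{aff}})^0$ --- is exactly the paper's argument, spelled out in more detail; your care in reading ``the central torus of $G$'' as the maximal central \emph{subtorus} (since $Z(G)^0$ may contain an abelian-variety part) is a point the paper leaves implicit.
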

\begin{proof}\renewcommand{\qedsymbol}{$\blacksquare$}
By theorem 5.1.1 in \cite{Brion} an algebraic group $G$ is generated by $G_{aff}$ and $G_{ant}$. Further more, $G_{ant}\subset Z(G)$ the centre of $G$(proposition 3.3.5 in \cite{Brion}). cThus $Z(G_{aff})$ commutes with $G_{ant}$ and with $G_{aff}$ and thus with G. Thus $Z(G_{aff})$ is contained in $Z(G)$.\par
Let $T$ be the central torus in $G$.
$G_{aff}$ contains all the connected affine algebraic subgroups and hence $T\subset Z(G_{aff})^0$. Thus the central torus $Z(G_{aff})^0$ of $G_{aff}$ is the central torus in $G$.
\end{proof}

We will now show that the restriction of any character, defined on a quasi- reductive algebraic group, to the central torus of $G_{aff}$ surjects onto $\mathbb{C}^*$. After this cohomological triviality of fibration of homogeneous spaces follows by the same methods as in section 3, for we have never used that $G$ is affine in the section.
\begin{theorem}
Let $G$ be a quasi- reductive group and $\mathcal{X}:G\rightarrow\mathbb{C}^*$ be a character. Then $\mathcal{X}|_{Z(G_{aff})^0}\rightarrow\mathbb{C}^*$ is surjective.    
\end{theorem}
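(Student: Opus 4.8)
The plan is to reduce the statement to the structure theory of affine reductive groups, which has already been exploited in Proposition \ref{prop1}. Recall from the previous theorem that for a quasi-reductive $G$ one has $Z(G_{aff})^0 = Z(G)^0$, so it suffices to understand characters of $G$ restricted to $G_{aff}$. First I would observe that, by Theorem 5.1.1 of \cite{Brion}, $G$ is generated by $G_{aff}$ and $G_{ant}$, and that $G_{ant}$ is anti-affine, hence $\mathcal{X}$ is trivial on $G_{ant}$. Therefore $\mathcal{X}(G) = \mathcal{X}(G_{aff})$, so $\mathcal{X}|_{G_{aff}}: G_{aff}\to\mathbb{C}^*$ is a \emph{non-trivial} (in fact surjective) character of the connected reductive group $G_{aff}$.

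Next I would invoke the standard fact about characters of reductive groups that was used in Proposition \ref{prop1}: a character of a connected reductive group vanishes on the derived group $D(G_{aff})$, hence factors through the quotient $G_{aff}/D(G_{aff})$, which is isogenous to the central torus $Z(G_{aff})^0$. Concretely, the composite
\begin{center}
\begin{tikzcd}
Z(G_{aff})^0 \arrow[r] & G_{aff} \arrow[r, "\mathcal{X}"] & \mathbb{C}^*
\end{tikzcd}
\end{center}
is a morphism of tori (a character of $Z(G_{aff})^0$), and its image equals the image of $\mathcal{X}|_{G_{aff}}$ because the multiplication map $Z(G_{aff})^0 \times D(G_{aff}) \to G_{aff}$ is surjective (indeed an isogeny onto $G_{aff}$ up to a finite central kernel) and $\mathcal{X}$ kills the second factor. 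Combining this with the first paragraph, $\mathcal{X}(Z(G_{aff})^0) = \mathcal{X}(G_{aff}) = \mathcal{X}(G) = \mathbb{C}^*$, which is exactly the assertion that $\mathcal{X}|_{Z(G_{aff})^0}$ is surjective.

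The one point that needs a little care — and which I expect to be the main obstacle — is justifying that the image of $\mathcal{X}$ restricted to the central torus is all of $\mathbb{C}^*$, not merely a subgroup of finite index. This is where one uses that $Z(G_{aff})^0 \cdot D(G_{aff}) = G_{aff}$ as an \emph{equality of sets} (surjectivity of the multiplication map on $\mathbb{C}$-points), together with the fact that any surjective homomorphism of algebraic tori $Z(G_{aff})^0 \to \mathcal{X}(Z(G_{aff})^0)$ followed by the inclusion into $\mathbb{C}^*$ has image a subtorus of $\mathbb{C}^*$; since $\mathbb{C}^*$ has no proper subtori of the same dimension and $\mathcal{X}$ is non-constant, the image is all of $\mathbb{C}^*$. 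Once this is in place the proof is complete; the remark that cohomological triviality of fibrations of homogeneous spaces of $G$ then follows verbatim from the arguments of Section \ref{Ctr} (Proposition \ref{prop1} and Theorem \ref{thm1}), since affineness of $G$ was never actually used there, only the existence of a central $\mathbb{C}^*$ surjecting onto the base.
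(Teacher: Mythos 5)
Your proposal is correct and follows essentially the same route as the paper: kill $\mathcal{X}$ on the anti-affine part $G_{ant}$ so that the character is already surjective on $G_{aff}$, then use that a character of the connected reductive group $G_{aff}$ vanishes on $D(G_{aff})$ and hence restricts to a surjection on the central torus $Z(G_{aff})^0$ (exactly the fact already used in Proposition \ref{prop1}). The extra care you take about the image being a full subtorus rather than a finite-index subgroup is a fine point the paper leaves implicit, and your argument for it is sound.
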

\begin{proof}\renewcommand{\qedsymbol}{$\blacksquare$}
Any non- trivial character $\mathcal{X}$ on $G$ factors through $\frac{G}{G_{ant}}$ as $\mathcal{X}|_{G_{ant}}$ is trivial and $G_{aff}$ surjects onto $\frac{G}{G_{ant}}$.
\begin{center}
\begin{tikzcd}
                  &                                                           &                     &\mathbb{C}^*\\ 
1\arrow{r} & G_{ant} \arrow{r} & G \arrow{r}\arrow[swap]{ur} & \frac{G}{G_{ant}}\arrow{u}\\
 &     Z(G_{aff})^0\arrow{r}& G_{aff}\arrow{u} \arrow[swap]{ur} &
\end{tikzcd}
\end{center}
The restriction of $\mathcal{X}$ to $G_{aff}$ gives a character on $G_{aff}$
and since $G_{aff}$ is reductive $Z(G_{aff})^0$ surjects onto $\mathbb{C}^*$.
\end{proof}

Now the results of section \ref{Ctr} go through without any difference and we have 
\begin{theorem}
Let $G$ be an arbitrary connected algebraic group, and $\mathcal{X}: G\longrightarrow\mathbb{C}^*$ be a non trivial character with the kernel $S$, which we assume to be connected. Then the fibration
\begin{center}
$1\rightarrow S\rightarrow G\rightarrow\mathbb{C}^*\rightarrow 1$
\end{center} 
is cohomologically trivial.
\end{theorem}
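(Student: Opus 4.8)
The plan is to reduce the statement to the already-proven affine case (Proposition \ref{prop1}) by quotienting out the unipotent radical, exactly as in Lemma \ref{lem3.1} and the corollaries of Section \ref{Ctr}. First I would invoke the existence of the unipotent radical $G_u$ (the lemma on the maximal unipotent normal affine subgroup) and note that $G_u$ is contained in $S = \ker(\mathcal{X})$, since $G_u$ is unipotent and there are no non-constant morphisms from a unipotent group to $\mathbb{C}^*$; hence $\mathcal{X}$ factors through $G/G_u$. Topologically $G_u \cong \mathbb{A}^n$ is contractible, so $G \to G/G_u$ is a homotopy equivalence and likewise $S \to S/G_u$; this replaces the original fibration $S \to G \to \mathbb{C}^*$ by the homotopy-equivalent fibration $S/G_u \to G/G_u \to \mathbb{C}^*$, where $S/G_u$ is still connected (a quotient of a connected group).

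Next I would observe that $G/G_u$ is a connected \emph{quasi-reductive} algebraic group in the sense of the preceding definition: its affine part $(G/G_u)_{\mathrm{aff}}$ has trivial unipotent radical. Then I would run the argument of Proposition \ref{prop1} verbatim, the only point needing justification being the existence of a central $\mathbb{C}^*$ in $G/G_u$ surjecting onto the base $\mathbb{C}^*$ under $\mathcal{X}$ — but this is precisely the content of the theorem just proved, that $\mathcal{X}|_{Z((G/G_u)_{\mathrm{aff}})^0}$ is surjective onto $\mathbb{C}^*$, together with the earlier theorem identifying $Z((G/G_u)_{\mathrm{aff}})^0$ as a central torus of $G/G_u$. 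With a central $\mathbb{C}^*$ in hand, choose a splitting up to isogeny of $\mathcal{X}$ restricted to it, form the finite-cover fibre product $\tilde G \subset (G/G_u) \times \mathbb{C}^*$, and note (long exact homotopy sequence, connectedness of $S/G_u$ and of the base) that $\tilde G$ is connected and abstractly isomorphic to $(S/G_u) \times \mathbb{C}^*$ via $(s,t) \mapsto (st,t)$, which is a homomorphism because the chosen $\mathbb{C}^*$ is central. Hence the pulled-back fibration is trivial, and the monodromy of the original fibration is given by translation on $S/G_u$ by a root of unity $\zeta \in S/G_u$; since $S/G_u$ is path-connected this translation is homotopic to the identity, so by Lemma \ref{le} the original fibration is cohomologically trivial.

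The only genuinely new input beyond Section \ref{Ctr} is the structure theory developed in this section — the existence of $G_u$ as a normal affine subgroup, and the fact that a non-trivial character of a quasi-reductive group restricts surjectively to the central torus of its affine part, which in turn rests on Brion's theorems that $G$ is generated by $G_{\mathrm{aff}}$ and the anti-affine part $G_{\mathrm{ant}}$, that $G_{\mathrm{ant}} \subset Z(G)$, and that any character kills $G_{\mathrm{ant}}$. I expect the main (and essentially only) obstacle to be checking that none of the topological arguments of Proposition \ref{prop1} and Theorem \ref{thm1} secretly used affineness of $G$: they did not — Lemma \ref{lem2} (that $H \to G \to G/H$ is a fibre bundle) was already remarked to hold without the affine hypothesis, the contractibility of $G_u$ is valid for any unipotent group over a perfect field, and everything else is homotopy-theoretic. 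So the proof is a short assembly: reduce mod $G_u$, cite the quasi-reductive structure theorems to produce the central $\mathbb{C}^*$, and repeat the finite-cover-trivialization argument of Proposition \ref{prop1}.
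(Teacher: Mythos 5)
Your proposal is correct and follows essentially the same route as the paper: reduce to the quasi-reductive case by quotienting the unipotent radical (a homotopy equivalence), invoke the preceding theorems that $Z(G_{\mathrm{aff}})^0$ is a central torus surjecting onto $\mathbb{C}^*$ under $\mathcal{X}$, and then rerun the covering-space/fibre-product argument of Proposition \ref{prop1}. The paper's own proof is just a two-line citation of these same ingredients, so your more detailed verification (in particular that nothing in Section \ref{Ctr} used affineness) matches its intent exactly.
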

\begin{proof}\renewcommand{\qedsymbol}{$\blacksquare$}
$\frac{G}{G_u}$ has the same homotopy type as $G$ we can assume that the group $G$ is quasi- reductive. Since $Z(G_{aff})^0$ surjects onto $\mathbb{C}^*$ we can apply the covering space techniques of proposition \ref{prop1}.
\end{proof}
We also have the analogue of the corollary \ref{coro}
\begin{coro}
A character $\mathcal{X}: G\rightarrow\mathbb{C}^*$, of quasi- reductive groups is split if and only if $\mathcal{X}|_{Z(G)^0}$ is primitive.\hfill $\blacksquare$
\end{coro}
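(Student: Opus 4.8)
The plan is to run the argument of Corollary~\ref{coro} essentially verbatim, the central torus of the reductive group there being replaced by $Z(G_{aff})^0$: by the preceding theorems $Z(G_{aff})^0$ is the central torus of $G$ and, under any non-trivial character, surjects onto $\mathbb{C}^*$. Since $G$ is quasi-reductive, $G_{aff}$ is reductive and $G_u=1$; throughout I will use that the covering/fibre-product argument of Proposition~\ref{prop1} never used affineness of the ambient group, only the presence of a central $\mathbb{C}^*$ mapping onto $\mathbb{C}^*$.

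First the easy implication. If $\mathcal{X}$ is split, $G = S\times\mathbb{C}^*$ with $S=\ker\mathcal{X}$; the direct factor $\mathbb{C}^*$ commutes with all of $G$, hence sits inside $Z(G)^0$, and provides a section of $\mathcal{X}|_{Z(G)^0}$. Consequently $\ker(\mathcal{X}|_{Z(G)^0})$ is a direct factor of the connected group $Z(G)^0$, so it is connected, i.e. $\mathcal{X}|_{Z(G)^0}$ is primitive.

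For the converse, suppose $\mathcal{X}|_{Z(G)^0}$ is primitive, i.e. has connected kernel. First I would record the structure of $Z(G)^0$: by Theorem~5.1.1 of \cite{Brion}, $G = G_{aff}\cdot G_{ant}$ with $G_{ant}\subseteq Z(G)$, whence $Z(G)^0 = Z(G_{aff})^0\cdot G_{ant}$. Since $G_{ant}$ is anti-affine it carries no non-trivial character, so $\mathcal{X}|_{Z(G)^0}$ descends to a character with connected kernel on the torus $Z(G)^0/G_{ant}$; pulling this back along the surjection $Z(G_{aff})^0\twoheadrightarrow Z(G)^0/G_{ant}$, whose kernel $Z(G_{aff})^0\cap G_{ant}=G_{aff}\cap G_{ant}$ is connected, shows that $\mathcal{X}|_{Z(G_{aff})^0}$ has connected kernel too. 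Now the reductive argument applies literally: by the corollary on page~118 of \cite{Borel} the kernel of $\mathcal{X}|_{Z(G_{aff})^0}$ is a subtorus and a direct factor, $Z(G_{aff})^0 = T_0\times G_m$ with $\mathcal{X}|_{G_m}\colon G_m\xrightarrow{\sim}\mathbb{C}^*$. This $G_m$ is a central $\mathbb{C}^*$ of $G$ mapping isomorphically onto $\mathbb{C}^*$, so $S\times\mathbb{C}^*\to G$, $(s,t)\mapsto st$, is a group homomorphism which is surjective (because $G_m$ splits $\mathcal{X}$) and injective (by the kernel computation in Corollary~\ref{coro}: $s_1t_1=s_2t_2$ forces $t_1=t_2$ and then $s_1=s_2$). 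Hence $G=S\times\mathbb{C}^*$ and $\mathcal{X}$ is split.

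The one step going beyond the reductive case, and the place I expect the real work to be, is the middle of the converse: passing from primitivity of $\mathcal{X}|_{Z(G)^0}$ to primitivity of $\mathcal{X}|_{Z(G_{aff})^0}$. This rests on the structure theory in \cite{Brion} — that $G=G_{aff}\cdot G_{ant}$, that $G_{ant}$ is central, anti-affine and annihilated by every character, and that $G_{aff}\cap G_{ant}$ is connected — and once these are in hand everything else is a word-for-word transcription of Section~\ref{Ctr}, valid because affineness of $G$ was never used there.
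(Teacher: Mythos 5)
Your forward direction and the final covering/direct-factor step of the converse are fine, and you correctly isolated the one genuinely new point; unfortunately that is exactly where the argument breaks. (The paper itself offers no proof here, only the assertion that the reductive argument carries over.) The claim that the kernel $Z(G_{aff})^0\cap G_{ant}$ of $Z(G_{aff})^0\twoheadrightarrow Z(G)^0/G_{ant}$ is connected is false in general: Theorem 5.1.1 of \cite{Brion} only says that $G_{aff}\cap G_{ant}$ contains $(G_{ant})_{aff}$ as a subgroup of \emph{finite index}, and that index can be nontrivial. Worse, the implication you need --- primitivity of $\mathcal{X}|_{Z(G)^0}$ implies primitivity of $\mathcal{X}|_{Z(G_{aff})^0}$ --- genuinely fails if ``primitive'' is read as ``connected kernel''. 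Take an elliptic curve $E$, a point $x\in E$ of order $2$, an anti-affine extension $1\to\mathbb{G}_m\to G_{ant}\xrightarrow{\pi}E\to 1$, and a lift $y\in G_{ant}$ of $x$ normalized so that $y^2=1$. Put $G=(G_{ant}\times\mathbb{G}_m)/\langle(y,-1)\rangle$ and $\mathcal{X}([(g,t)])=t^2$. Then $G$ is a connected commutative quasi-reductive group, so $Z(G)^0=G$; the kernel of $\mathcal{X}$ is the connected image of $G_{ant}\times 1$ (because $[(g,-1)]=[(gy,1)]$), so $\mathcal{X}|_{Z(G)^0}$ has connected kernel; yet $G_{aff}\cong\mathbb{G}_m\times\mathbb{G}_m$ with $\mathcal{X}|_{G_{aff}}(s,t)=t^2$ has the disconnected kernel $\mathbb{G}_m\times\mu_2$, and $\mathcal{X}$ admits no section $\mathbb{C}^*\to G$ (any homomorphism from $\mathbb{C}^*$ has connected affine image, hence lands in $G_{aff}$), so $\mathcal{X}$ is not split.

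The upshot is that the statement must be read with the maximal central torus $Z(G_{aff})^0$ --- which the theorem immediately preceding this corollary identifies as the object playing the role of $Z(G)^0$ from the reductive case --- rather than with the literal $Z(G)^0$, which now also contains $G_{ant}$. With that reading your problematic reduction step disappears entirely and the rest of your argument is correct and is clearly the intended one: primitivity of $\mathcal{X}|_{Z(G_{aff})^0}$ gives $Z(G_{aff})^0=T_0\times G_m$ with $\mathcal{X}|_{G_m}$ an isomorphism by the corollary on p.~118 of \cite{Borel}, and then $(s,t)\mapsto st$ is an isomorphism $S\times\mathbb{C}^*\to G$, with surjectivity and injectivity exactly as in Corollary \ref{coro}.
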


Further, theorem \ref{thm1} is also generalized as
\begin{theorem}
Consider  a fibration of homogeneous spaces of  a connected algebraic group $G$: 
\begin{center}
$\frac{S}{H}\longrightarrow\frac{G}{H}\longrightarrow\mathbb{C}^*$
\end{center}
Suppose that the kernel $S$ of the associated character is connected, then the fibration is cohomologically trivial.\hfill $\blacksquare$
\end{theorem}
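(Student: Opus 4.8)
The plan is to reduce to Theorem~\ref{thm1} by the two-step strategy of Section~\ref{Ctr}: first remove the unipotent radical, then run the covering-space argument with a central $\mathbb{C}^*$. The only novelty over Section~\ref{Ctr} is that here the existence of that central $\mathbb{C}^*$ rests on the structure theorems for non-affine groups proved just above (the existence of $G_u$, and that $Z(G_{aff})^0$ is central in all of $G$ and surjects onto $\mathbb{C}^*$ under any non-trivial character), rather than on reductivity of $G$.

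First I would pass to the quasi-reductive quotient. By the lemma on the existence of the unipotent radical, $G$ has a normal unipotent affine subgroup $G_u$ with $G/G_u$ quasi-reductive, and by \cite{KMT} the variety $G_u$ is an affine space, hence contractible, so $G$ and $G/G_u$ have the same homotopy type. Since any subgroup of a unipotent group is unipotent, $H\cap G_u$ and $S\cap G_u$ are unipotent, and exactly as in the affine corollary of Section~\ref{Ctr} the fibration $\frac{S}{H}\to\frac{G}{H}\to\mathbb{C}^*$ becomes, up to homotopy,
\begin{center}
\begin{tikzcd}
\frac{S/S\cap G_u}{H/H\cap G_u}\arrow[r] & \frac{G/G_u}{H/H\cap G_u}\arrow[r] & \mathbb{C}^\ast,
\end{tikzcd}
\end{center}
whose base group is quasi-reductive and whose fibre is still connected, being a continuous image of the connected space $\frac{S}{H}$. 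So without loss of generality $G$ is quasi-reductive.

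Now I would repeat the argument of Proposition~\ref{prop1} and Theorem~\ref{thm1} essentially verbatim, which is legitimate because affineness of $G$ was never used there. Using that $Z(G_{aff})^0$ is the central torus of $G$ and that $\mathcal{X}$ restricts to a surjection of $Z(G_{aff})^0$ onto $\mathbb{C}^*$, choose a one-dimensional subtorus $\mathbb{C}^*\hookrightarrow Z(G_{aff})^0\subset Z(G)$ on which $\mathcal{X}$ is the isogeny $z\mapsto z^d$ with finite cyclic kernel $\Gamma$. Pulling $\mathcal{X}$ back along the $d$-th power map $\mathbb{C}^*\to\mathbb{C}^*$ produces a connected finite cover $\widetilde G\subset G\times\mathbb{C}^*$; since the chosen $\mathbb{C}^*$ is central, $(s,t)\mapsto(st,t)$ is a group isomorphism $S\times\mathbb{C}^*\xrightarrow{\sim}\widetilde G$, and quotienting by $H$ identifies $\frac{\widetilde G}{H}$ with $\frac{S}{H}\times\mathbb{C}^*$, so the pulled-back fibration $\frac{S}{H}\to\frac{\widetilde G}{H}\to\mathbb{C}^*$ is trivial. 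A loop in the base lifts to a path in $\mathbb{C}^*$ ending at a root of unity $\zeta\in\Gamma\subset S$, so the monodromy of $\frac{S}{H}\to\frac{G}{H}\to\mathbb{C}^*$ is translation by $\zeta$ on $\frac{S}{H}$, which is homotopic to the identity because $S$ is connected (translate along a path from $\zeta$ to $1$ in $S$). Hence it acts trivially on $H^*\big(\frac{S}{H}\big)$, and by Lemma~\ref{le} together with commutativity of the fibre-product square the original fibration is cohomologically trivial; moreover the monodromy group is finite, hence semisimple.

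The step I expect to demand the most care is the reduction to the quasi-reductive case: verifying that $G_u$ is affine and contractible, that $H/H\cap G_u\subset G/G_u$ and $S/S\cap G_u\subset G/G_u$ are the correct closed-subgroup and homogeneous-space data, and that the three spaces in the reduced diagram carry the homotopy types of the original three. All of this is supplied by the structure results established earlier in this section; granting them and the existence of the central $\mathbb{C}^*$, the topological core of the proof is identical to that of Theorem~\ref{thm1}.
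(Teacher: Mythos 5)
Your proposal is correct and follows exactly the route the paper intends: the paper gives no separate argument for this theorem beyond the remark that "the results of section \ref{Ctr} go through without any difference," relying on the just-proved facts that $G_u$ exists and is contractible, that $Z(G_{aff})^0$ is the central torus of $G$, and that a non-trivial character surjects when restricted to it. Your write-up simply makes that reduction and the repetition of the Proposition~\ref{prop1}/Theorem~\ref{thm1} covering-space argument explicit, which is precisely what is meant.
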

We note that this method of study generalizes to a surjective map of quasi- reductive group to a torus and the same results are valid in this context.
In general the question of cohomological triviality is hard. When the base is a commutative group our method of analysis requires an isogenous central subgroup that maps onto the base, which naturally leads to the covering space theory to produce a trivial fibre bundle over the group $G$. This requirement is quite strong: in fact, if the base is an abelian variety then there is no guarantee of an abelian variety in $G$ that maps onto the base. However when the base is the universal semiabelian variety $\frac{G}{D(G_{aff})}$ of a reductive group(c.f. \cite{Brion}, page 51) we have the following:
\begin{theorem}
Let $G$ be a quasi- reductive group and $G_{sab}$ be the quotient semiabelian variety $\frac{G}{D(G_{aff})}$, where $D(G_{aff})$ is the derived group of the affine part of $G$ and also the derived group of $G$ (see corollary 5.1.5 in \cite{Brion}. Then the fibration $$ 1\rightarrow D(G_{aff})\rightarrow G\xrightarrow{\pi} G_{sab}\rightarrow 1$$ is cohomologically trivial.
\end{theorem}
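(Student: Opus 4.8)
The plan is to reduce to the covering--space argument of Proposition \ref{prop1} by exhibiting a \emph{central} subgroup of $G$ that maps onto the base $G_{sab}$ by an isogeny. Set $Z:=Z(G_{aff})^{0}\cdot G_{ant}$. By the theorem identifying $Z(G_{aff})^{0}$ with the central torus of $G$, together with $G_{ant}\subseteq Z(G)$ (\cite{Brion}, Proposition~3.3.5), $Z$ is a connected commutative central subgroup of $G$. Since $G=G_{aff}\cdot G_{ant}$ (\cite{Brion}, Theorem~5.1.1) and $G_{aff}=D(G_{aff})\cdot Z(G_{aff})^{0}$ because $G_{aff}$ is reductive, we get $G=D(G_{aff})\cdot Z$; moreover $Z\cap D(G_{aff})$ is central in $G$, hence central in the semisimple group $D(G_{aff})$, hence finite. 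Therefore the restriction $\phi\colon Z\to G_{sab}=G/D(G_{aff})$ of the quotient map is an isogeny, and, since we work in characteristic zero, it is a finite covering map of topological spaces; call its (finite) kernel $\Gamma$.

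Now I imitate the proof of Proposition \ref{prop1}. Form the fibre product $\tilde{G}:=G\times_{G_{sab}}Z$, giving the commutative diagram of fibrations
\begin{center}
\begin{tikzcd}
D(G_{aff})\arrow{r}\arrow{d} & \tilde{G}\arrow{r}\arrow[swap]{d}{\pi_{1}}\arrow[dr,phantom,"\square"] & Z\arrow{d}{\phi}\\
D(G_{aff})\arrow{r} & G\arrow[swap]{r}{\pi} & G_{sab}
\end{tikzcd}
\end{center}
The map $D(G_{aff})\times Z\to\tilde{G}$, $(d,z)\mapsto(dz,z)$, is an isomorphism of algebraic groups (it is a homomorphism because $Z$ is central, and it is bijective since $D(G_{aff})=\ker\pi$); hence $\tilde{G}$ is connected and the top fibration $D(G_{aff})\to\tilde{G}\to Z$ is the trivial product fibration. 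The covering $\pi_{1}\colon\tilde{G}\to G$ has finite deck group isomorphic to $\Gamma$, acting on $\tilde{G}\cong D(G_{aff})\times Z$ by translations, which are isotopic to the identity because $\tilde{G}$ is connected; so $\Gamma$ acts trivially on $H^{\ast}(\tilde{G};\mathbb{Q})$ and the transfer gives $H^{\ast}(G;\mathbb{Q})=H^{\ast}(\tilde{G};\mathbb{Q})^{\Gamma}=H^{\ast}(\tilde{G};\mathbb{Q})=H^{\ast}(D(G_{aff});\mathbb{Q})\otimes H^{\ast}(Z;\mathbb{Q})$ by the K\"unneth formula for the product. Applying the same reasoning to $\phi\colon Z\to G_{sab}$ (whose deck group $\Gamma$ again acts on the connected group $Z$ by translations) yields $H^{\ast}(G_{sab};\mathbb{Q})=H^{\ast}(Z;\mathbb{Q})$. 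Combining, and using naturality of the Leray--Serre spectral sequence with respect to the bundle map $\tilde{G}\to G$ lying over $\phi$, we obtain $H^{k}(G;\mathbb{Q})\cong\bigoplus_{p+q=k}H^{p}(D(G_{aff});\mathbb{Q})\otimes H^{q}(G_{sab};\mathbb{Q})$ as graded vector spaces, the isomorphism being the one induced by $\pi$; that is, the fibration is cohomologically trivial. Equivalently, in the style of Lemma \ref{le} and the computation in Section \ref{con}: a loop in $G_{sab}$ lifts through $\phi$ to a path in $Z$ ending at a point $\zeta\in\Gamma\subseteq D(G_{aff})$, the induced self-map of the fibre $D(G_{aff})$ is translation by $\zeta$, hence isotopic to the identity since $D(G_{aff})$ is connected, so the monodromy is trivial, and degeneration at $E_{2}$ propagates from the trivial bundle $\tilde{G}\to Z$ by the transfer argument above.

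The only real work is the structure-theoretic input of the first paragraph, namely recognizing $Z=Z(G_{aff})^{0}\cdot G_{ant}$ as a central subgroup that surjects onto $G_{sab}$ with finite kernel; this is exactly where one needs the earlier description of the central torus of a quasi-reductive group together with Brion's decomposition $G=G_{aff}\cdot G_{ant}$ with $G_{ant}$ central. Once this central isogeny cover of the base has been produced, the covering-space machinery of Section \ref{Ctr} applies with only the evident changes for a general (rather than one-dimensional) base, and nothing further is needed. A minor point to verify along the way is that $G_{ant}$ is connected, so that $Z$ and $\tilde{G}$ are connected; this holds because an anti-affine group, having $\mathcal{O}(G)=k$, admits no non-trivial finite quotient.
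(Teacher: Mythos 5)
Your proposal is correct and follows essentially the same route as the paper: you identify the same central subgroup $Z(G_{aff})^{0}\cdot G_{ant}$, show it maps onto $G_{sab}$ by an isogeny with finite kernel (central in the semisimple group $D(G_{aff})$), and then run the fibre-product/covering-space argument of Proposition \ref{prop1}. The only difference is that you spell out the transfer and monodromy details that the paper leaves implicit, which is fine.
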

\begin{proof}\renewcommand{\qedsymbol}{$\blacksquare$}
We have the following commutative diagram
\begin{center}
\begin{tikzcd}
1\arrow{r} & D(G_{aff}) \arrow{r} & G\arrow{r}{\pi} & G_{sab} \arrow{r} &1\\
                  &                & G_{ant}\cdot Z(G_{aff})^0  =G' \arrow{u} \arrow[swap]{ur} & & \\
\end{tikzcd}
\end{center}
We note that $G'$ is central in $G$, as $G_{ant}$ and $Z(G_{aff})^0$ are central. Since $G$ is quasi- reductive, $G$ is generated by $D(G_{aff})$, and $G'$, thus $\pi |_{G'}$ is surjective. Further, $ker(\pi|_{G'})= G'\cap D(G_{aff})$ is a central torus contained in $D(G_{aff})$ thus $ker(\pi|_{G'})$ is finite and hence $\pi|_{G'}$ is an isogeny and hence a covering of $G_{sab}$. Now following the method of proof of proposition \ref{prop1} we have the result.
\end{proof}

\begin{remark}
We now give a result for the  structure for arbitrary connected quasi- reductive algebraic group $G$ upto isogeny, as an application of our methods. The existence of the maximal abelian sub variety can already be seen in the work of Rosenlicht corollary page 434 in \cite{Rose}. For a modern treatment we refer to theorem 4.2.5 in \cite{Brion}. \par
First, we make a general comment which we will use: $Q$ be a quotient group of a group $G$, let $H$ be a central sub- group of $G$ that is isogenous to $Q$, consider the fibre product
\begin{center}
\begin{tikzcd}
\tilde{G} \arrow{r}{}\arrow[swap]{d}{} & H\arrow{d}{}\\
G \arrow{r}{q}            & Q          
\end{tikzcd}
\end{center}
By the technique of the proof of proposition  \ref{prop1}, we have $\tilde{G}\approx H\times ker(q)$. Thus it is enough to find a central subgroup that is isogenous to a quotient group to get such a decomposition of $G$ upto isogeny. \par
Consider the morphism $ q : G\to  \frac{G}{G_{ant}} \to T'$, where $T'$ is the maximal toral quotient of the group $\frac{G}{G_{ant}}$(and hence a maximal toral quotient of $G$). Now the center $Z(G_{aff})^0$ surjects onto $T'$ and thus there is a torus $T\subset Z(G_{aff})^0$ that is isogenous to $T'$. Hence by the earlier the comment in the previous paragraph we have an isogeny of $G\approx T\times G_1$, where $G_1$ is the kernel of the composition. \par
We note that the derived groups of $G$ and $G_1$ are the same, since the derived groups are affine, as $G_{ant}$ is commutative $G_{ant}$ is trivial in $D(G)$ (also corollary 5.1.5 \cite{Brion} and since $G_{aff}$ is reductive $D(G)$ is semisimple ) and $G_{ant}\subset G_1$. Now consider the  morphism $f : G_1 \to \frac{G_1}{D(G)}=Q$, $G_{ant}$ maps surjectively onto $Q$ and the kernel $G_{ant}\cap D(G_1)$ is finite (the affine part of $G_{ant}$ is a torus!). Thus $G_{ant}$ is isogenous to $Q$ and hence $G_1\approx D(G) \times G_{ant}$.\par

Let $A$ be a maximal abelian subvariety of $G_{ant}$ and consider the image of $A$, say $A'$ under the albanese morphism $\alpha$. By Poincar\'e's complete reducibility theorem (c.f. theorem 1 page 160 \cite{mum}) $A'$ has a complementary abelian subvariety in $\alpha(G_{ant})$ say $B$ upto isogeny. Taking the composition of $\alpha$ with the quotient map to $\frac{\alpha(G_{ant})}{B}\approx A'$ we have a map $q:G_{ant}\to A' $ and the restriction of  $q$ to $A$ is an isogeny (the kernel is $((G _{ant})_{aff}\cap A$ which is finite). Thus $G_{ant}\approx A\times G_S$, where $G_S$ is a semi- abelian, anti- affine variety that has no proper abelian subvarieties. However, we note that the albanese map of $G_S$ is non trivial. Further, $A$ is the unique maximal abelian variety in $G_{ant}$, this follows as $A$ is the unique abelian variety in $\tilde{G}_{ant}$. \par
Hence upto isogeny, any quasi- reductive algebraic group has the following structure $G_{ss}\times T \times A \times G_S$, where $G_{ss}$ is affine semisimple, $T$ is a torus, $A$ is the unique abelian variety that is split (in $ G_{ant}$) and $G_S$ is a semi- abelian, anti- affine variety with no proper abelian subvariety. \par 
Further, consider a homogeneous space $\frac{G}{H}$ with surjective morphism onto a commutative algebraic group $B$, so that the composition with the quotient map $q: G\to\frac{G}{H}$ is a homomorphism of algebraic groups, with kernel $S$ connected. Now suppose that, there is a central subgroup $G_1\subset G$ which is isogenous to $B$, then the cover of $\frac{G}{H}$ is $\tilde{\frac{G}{H}}\approx\frac{S}{H}\times G_1$, where $\frac{S}{H}$ is the fibre over $e$ and the fibration $\frac{S}{H}\to\frac{G}{H}\to B$ is cohomologically trivial. 
\end{remark}

\end{document}